\newtheorem{proposition}{Proposition}[section]
\newtheorem{theorem}[proposition]{Theorem}
\newtheorem{lemma}[proposition]{Lemma}
\newtheorem{corollary}[proposition]{Corollary}
\newtheorem{remark}[proposition]{Remark}
\newtheorem{example}[proposition]{Example}
\newenvironment{proof}{{\noindent \bf Proof:}}{\hfill $\fbox{}$ \vspace*{5mm}}
\numberwithin{equation}{section}
\newcommand{\ba}{{\bf a}}
\newcommand{\bb}{{\bf b}}
\newcommand{\bfe}{{\bf e}}
\newcommand{\bfo}{{\bf 1}}
\newcommand{\bff}{{\bf f}}
\newcommand{\bg}{{\bf g}}
\newcommand{\br}{{\bf r}}
\newcommand{\bx}{{\bf x}}
\newcommand{\by}{{\bf y}}
\newcommand{\bz}{{\bf z}}
\newcommand{\bv}{{\bf v}}
\newcommand{\bw}{{\bf w}}
\newcommand{\cf}{{\cal F}}
\newcommand{\cm}{{\cal M}}
\newcommand{\cs}{{\cal S}}
\newcommand{\cz}{{\cal Z}}
\newcommand{\la}{\lambda}
\newcommand{\R}{{\mathbb{R}}}
\newcommand{\Rm}{{\mathbb{R}^m}}
\newcommand{\Rn}{{\mathbb{R}^n}}
\newcommand{\RN}{{\mathbb{R}^N}}
\newcommand{\RmN}{{\mathbb{R}^{m\times N}}}
\newcommand{\supp}{{\rm supp}}
\newcommand{\ve}{{\rm vec}}
\newcommand{\BE}{\begin{equation}}
\newcommand{\EE}{\end{equation}}
\DeclareMathOperator*{\argmax}{argmax}
\DeclareMathOperator*{\argmin}{argmin}
\newcommand{\normmm}[1]{{\vert\kern-0.25ex \vert\kern-0.25ex \vert #1
    \vert\kern-0.25ex \vert\kern-0.25ex\vert}}
\begin{document}
\title{\bf A Modified Orthogonal Matching Pursuit for  Construction of Sparse Probabilistic Boolean Networks}
\author{Guiyun Xiao\thanks{School of Mathematical Sciences, Xiamen University, Xiamen 361005, People's Republic of China  (xiaogy999@163.com).}
\and Zheng-Jian Bai\thanks{Corresponding author. School of Mathematical Sciences and Fujian Provincial Key Laboratory on Mathematical Modeling \& High Performance Scientific Computing,  Xiamen University, Xiamen 361005, People's Republic of China (zjbai@xmu.edu.cn). The research of this author was partially supported by the National Natural Science Foundation of China (No. 11671337).}
\and Wai-Ki Ching\thanks{Advanced Modeling and Applied Computing Laboratory, Department of Mathematics, The University of Hong Kong, Pokfulam Road, Hong Kong (wching@hku.hk). Research supported in part by Hong Kong RGC GRF Grant no. 17301519, IMR and RAE Research fund from Faculty of Science, HKU.} }
\maketitle

\begin{abstract}
Probabilistic Boolean Networks play a remarkable role in the modelling and control of gene regulatory networks. In this paper, we consider the inverse problem of constructing a sparse probabilistic Boolean network from the prescribed transition probability matrix. We propose a modified orthogonal matching pursuit for solving the inverse  problem. We provide some conditions under which the proposed algorithm can recover a sparse probabilistic Boolean network.  We also report some numerical results to  illustrate the effectiveness of the proposed algorithm.
\end{abstract}

\vspace{3mm}
{\bf Keywords.} Probabilistic Boolean network, inverse problem, sparse, modified orthogonal matching pursuit

\section{Introduction}\label{sec1}

\subsection{Boolean Networks and probabilistic Boolean networks}
Boolean Network (BN) and Probabilistic Boolean Network (PBN) arise in a wide variety of applications. The BN model was originally proposed by Kauffmann in 1969 for exploring dynamical properties of  gene regulatory networks \cite{K69} (see also \cite{K69b,K93}). The BN model has been used in different biological systems, including  apoptosis, the yeast cell-cycle network, and T Cell Signaling, and so on (see for instance \cite{B08,LL04,SS07,SS09}).

As an extension of the BN, the PBN has gained much attention since it introduces uncertainty principles into a rule-based BN modelling \cite{SD10,SD02,SD02b}. The PBN model was originally proposed by Shmulevich et al. in 2002 for modelling genetic regulatory networks \cite{SD02b}. The PBN has been used in many applications such as biological systems (see for instance \cite{MW08,SD10}), biomedicine \cite{TM13}, credit defaults \cite{GC13}, and industrial machine systems \cite{RS18, RS18b}, etc.

In the following, we give the basic framework of  BNs and PBNs. As noted in \cite{SD02,SD02b}, a BN  includes a set of nodes (genes)
$V=\{ v_1, v_2, \ldots,v_n \}$ and a list of Boolean functions $F=\{ f_1, f_2, \ldots, f_n\}$. Here, for any $1\le i\le n$, $v_i(t)\in\{ 0,1\}$ is a binary variable, which means the  state (off/on) of gene $i$ at time $t$, and  $f_i:\{ 0,1\}^{n}\to \{ 0,1\}$ is  a Boolean function. The state  of gene $v_i$ at time $t+1$ is determined by
\[
v_i(t+1)= f_i (v_{i_1}(t), \ldots, v_{i_{w(i)}}(t))\equiv f_i (\bv(t)),
\]
where $w(i)$ is the number of essential variables of $f_i$ and $\bv(t)=(v_1(t),\ldots,v_n(t))^T\in\Rn$.
Therefore,  there are  $2^n$ possible global states in a BN with $n$ genes.

In a PBN with a set of  nodes (genes) $V=\{ v_1, v_2, \ldots,v_n \}$, for each gene $v_i$, there exist $l(i)$ possible functions: $F_i=\{f_1^{(i)},\ldots,f_{l(i)}^{(i)}\}$, where each  $f_p^{(i)}$ is a possible function determining the value of gene $v_i$. A realization of the PBN consists of  $N$ different possible realizations, which is determined by $N$ vector  functions $\bff_1,\ldots,\bff_N$ of the form
\BE\label{fi}
\bff_j=(f^{(1)}_{j_1},f^{(2)}_{j_2},\ldots,f^{(n)}_{j_n})^T, \quad j=1,\ldots,N,\quad 1\leq p_i\leq l(i)
\EE
where $f^{(i)}_{j_{i}}\in F_i$ for $i=1,\ldots, n$.

Suppose $\bff=(f^{(1)},\ldots, f^{(n)})^T$ is a random vector with $f^{(i)}\in F_i$. Then, the selection probability of function  $f^{(i)}=f_j^{(i)}$ for gene $v_i$ is given by
\[
c^{(i)}_{j} = {\rm Prob}\{f^{(i)}=f_j^{(i)}\}
\]
for $j=1,\ldots, l(i)$ and  $\sum_{j=1}^{l(i)} c^{(i)}_{j} =1$. Assume that the random variables $f^{(1)},\ldots, f^{(n)}$ are independent. Then, the PBN is called independent. In this case, the probability of choosing the  vector function $\bff_j$ in the form of  (\ref{fi})  is given by
\[
x_j= {\rm Prob}\{\bff=\bff_j\} =\prod_{i=1}^n {\rm Prob}\{f^{(i)}=f_{j_i}^{(i)}\}\equiv\prod_{i=1}^nc^{(i)}_{j_i}.
\]
Therefore, an independent PBN  includes a set of nodes $V$ and a list $\cf=\{F_1,\ldots,F_n\}$, which has $N=\prod_{i=1}^n l(i)$ possible realizations. We note that the independent PBN still has $2^n$ possible global states and the transition probability from state $\ba=(a_1,\ldots,a_n)^T$ to state $\bb=(b_1,\ldots,b_n)^T$  is determined by
\begin{eqnarray*} \label{P}
&& {\rm Prob} \{\bv(t+1)= \bb \ | \ \bv(t)= \ba \} \nonumber \\
&=& \sum_{j=1}^N \ {\rm Prob} \ \{ \bv(t+1)= \bb \ | \ \bv(t)=\ba, \mbox{the $j$th vector function (\ref{fi}) is selected}\} \cdot x_j.
\end{eqnarray*}
Then we obtain  the transition probability matrix $P\in\R^{2^n\times 2^n}$ of the PBN  \cite{CZ07}:
\[
P = \sum_{j=1}^N x_j A_j,
\]
where  $A_j\in\R^{2^n\times 2^n}$ is  the transition probability matrix  corresponding to the $j$th constituent vector function $\bff_j$. Here, $\R^{n_1\times n_2}$ is the set of all $n_1\times n_2$ real matrices ($\R^{n}=\R^{n\times 1}$) and each column of $A_j$ has only one nonzero entry and each column adds up to one.

\subsection{Construction of probabilistic Boolean network}

The inverse problem of constructing a PBN aims to identify all the  constituent BNs and corresponding selection probabilities such that the constructed PBN has the prescribed  transition probability matrix. Suppose a PBN consists of $N$ possible constituent BNs with the transition probability matrices  $\{A_j\}_{j=1}^N$. The inverse problem of constructing a PBN aims to  find the probability distribution vector $\bx=(x_1,\ldots, x_N)^T$ from the prescribed transition probability matrix $P$ and the constituent  BN matrices $\{A_j\}_{j=1}^N$ such that
\BE\label{pbn}
P=\sum_{j=1}^{N}x_jA_j, \quad \bfo^T \bx=1, \quad \bx\ge {\bf 0},
\EE
where $\bfo$ is a column vector of an appropriate dimension whose entries are all ones and for any two vectors $\bff,\bg\in\RN$, $\bg\ge\bff$ means that $g_j\ge f_j$ for $j=1,\ldots,N$. 

One may solve \eqref{pbn}  by the solution of the following minimization problem:
\BE\label{pbn:ls}
\begin{array}{lc}
 \min\limits_{\bx\in\RN} &  \displaystyle \frac{1}{2}\|P-\sum_{j=1}^{N}x_jA_j\|_F^2  \\
\mbox{subject to (s.t.)} & \bfo^T \bx=1,\quad \bx\ge  {\bf 0},
 \end{array}
\EE
where $\|\cdot\|_F$ denotes the  Frobenius norm.
Let
\BE\label{def:wu}
A=[\ve(A_1),\ve(A_2),\ldots,\ve(A_N)]\in\RmN
\quad\mbox{and}\quad
\bb=\ve(P)\in\Rm,
\EE
where $m=2^{2n}\ll N$ and $\ve(\cdot)$  generates a column vector from a matrix by stacking its column vectors below one another. Then the minimization problem \eqref{pbn:ls} takes the form of
\BE\label{pbn:ls-s}
\begin{array}{lc}
\min\limits_{\bx\in\RN}  &  \displaystyle \frac{1}{2}\|A\bx-\bb\|_2^2  \\
\mbox{s.t.} &\bfo^T \bx=1,\quad \bx\ge  {\bf 0}.
 \end{array}
\EE
where $\|\cdot\|_2$ denotes the Euclidean vector norm or its induced matrix norm.

In general, there are many solutions to the inverse problem. However, in practice, it is desired to find only a few major  constituent BNs with associated selection probabilities. That is, a sparse solution to the inverse problem gives a simple approximate PBN, which may provide a good control design for gene regulatory networks.  To find a sparse solution to problem \eqref{pbn:ls-s}, one may solve the  following $\ell_{0}$ regularization problem:
\BE\label{pbn:l0}
\begin{array}{lc}
\min\limits_{\bx\in\RN}  &  \displaystyle \frac{1}{2}\|A\bx-\bb\|_2^2 +\lambda \|\bx\|_0 \\
\mbox{s.t.} &\bfo^T \bx=1,\quad \bx\ge {\bf 0},
 \end{array}
\EE
where $\lambda>0$ is a regularization parameter and $\|\cdot\|_0$ means the number of nonzero entries of a vector. However, this is a NP-hard problem \cite{N95}. It is natural to consider the following  $\ell_1$-norm relaxed version of problem \eqref{pbn:l0}:
\BE\label{pbn:l1}
\begin{array}{lc}
\min\limits_{\bx\in\RN}  &  \displaystyle \frac{1}{2}\|A\bx-\bb\|_2^2 +\lambda \|\bx\|_1 \\
\mbox{s.t.} & \bfo^T \bx=1,\quad \bx\ge {\bf 0},
 \end{array}
\EE
There is a large literature on the solution of such convex minimization problem. However, it seems invalid to adopt  the  $\ell_1$  regularization for problem \eqref{pbn:ls-s} since  the equality constraint $\bfo^T \bx=1$ is equivalent to the $\ell_1$-norm  regularization term $\|\bx\|_1=1$ due to $\bx\ge 0$.

There exists many methods for finding a sparse solution to the inverse problem. For instance, a heuristic algorithm was proposed in \cite{CC08}. A dominant modified algorithm was proposed in \cite{CL10}. A maximum entropy rate approach and its modified version were proposed in \cite{CC11,CJ12,CC09}.  A projection-based gradient descent method was presented in \cite{WW15}.

Recently, an alternating direction method of multipliers  was given in \cite{LP14} for solving the following non-convex minimization problem with the $\ell_{1/2}$ regularization:
\[
\begin{array}{lc}
\min\limits_{\bx\in\RN}  &  \displaystyle \frac{1}{2}\mu\|A\bx-\bb\|_2^2 +\sum_{j=1}^{N}x_j\log x_j +\lambda \|\bx\|_{1/2}^{1/2} \\
\mbox{s.t.} &\bfo^T \bx=1,\quad \bx\ge {\bf 0},
 \end{array}
\]
where $\mu$ and $\la$ are two positive constants.
In \cite{DP19}, a partial proximal-type operator splitting method was proposed  for solving the $\ell_{1/2}$ regularization version of problem \eqref{pbn:l0}:
\[
\begin{array}{lc}
\min\limits_{\bx\in\RN}  &  \displaystyle \frac{1}{2}\|A\bx-\bb\|_2^2 +\lambda \|\bx\|_{1/2}^{1/2} \\
\mbox{s.t.} &\bfo^T \bx=1,\quad \bx\ge {\bf 0},
 \end{array}
\]
where $\la>0$ is a constant.
\subsection{Our contribution}
The orthogonal matching pursuit (OMP) is a greedy algorithm for solving the sparse approximation problem over a redundant dictionary, which was introduced independently in many references (see for instance \cite{CB89,DM94,PR93}). The sparse recovery of the OMP was analyzed by Tropp in \cite{T04} and was extended to the noise case \cite{CW11}. The OMP aims to find a sparse solution to an underdetermined linear system of linear equations $\by=\Phi\bw$, where $\Phi$ is a $q\times Q$ matrix with $q<Q$. However, the OMP can not be directly applied to finding a sparse solution to problem \eqref{pbn:ls-s} since there exist additional nonnegative constraint $\bx\ge{\bf 0}$ and equality constraint $\bfo^T\bx=1$.

In this paper, we propose a modified orthogonal matching pursuit (MOMP)  for finding a sparse solution to problem \eqref{pbn:ls-s}. By exploring  the properties of the $m\times N$ matrix $A$ and  the vector $\bb\in\Rm$ defined by \eqref{def:wu}, we give some conditions to guarantee that our method can find a sparse solution to problem \eqref{pbn:ls-s}.  We also present some  numerical examples to illustrate the efficiency of our method for constructing a sparse PBN.

\subsection{Organization} The rest of this paper is organized as follows. In Section
\ref{sec2}, we review the OMP and then propose a MOMP for constructing a sparse PBN. In Section \ref{sec3}, we discuss the convergence analysis of our method. In Section \ref{sec4}, we present some numerical examples to show the efficiency of the proposed method. Finally, we give some concluding remarks in Section \ref{sec5}.
\subsection{Notation}
Throughout this paper, we use the following notation. Let $I$ be the identity matrix of  an appropriate dimension. Denote by $\bfe_j$ the $j$-th column of $I$. The superscripts ``$\cdot^T$" denotes the transpose  of a matrix.   For any $A\in\RmN$, let $A=[\ba_1,\ldots,\ba_N]$.
For a complex number $a$, $|a|$ denotes the modulus of $a$.
Let $[N]=\{1,2,\ldots,N\}$ and for any set $\cs\subset[N]$, let $|\cs|$ and $[N]\backslash\cs$ be the cardinality of $\cs$ and the complement of $\cs$ in $[N]$, respectively.
For any set $\cs\subset[N]$, $A_\cs$ is the submatrix of a matrix $A$ with columns indexed by $\cs$.
A vector $\bz$ is called $d$-sparse if at most $d$ entries of $\bz$ are nonzero. Finally, denote by  $\supp(\bz):=\{j\in[N]\; |\; z_j\neq 0\}$ the support of a vector $\bz\in\RN$.

\section{A modified orthogonal matching pursuit} \label{sec2}

In this section, we first recall the  OMP for solving underdetermined linear systems. Then we propose a MOMP for solving problem \eqref{pbn:ls-s}.

\subsection{Orthogonal matching pursuit}
The OMP aims to find a sparse solution to the following underdetermined linear system:
\BE\label{le:omp}
\by=\Phi\bw,
\EE
where $\Phi\in\R^{q\times Q}$ is a measurement matrix with $q<Q$ and $\by\in\R^q$ is the  observation vector.  Then the OMP algorithm is stated as in Algorithm \ref{ap0}.
\begin{algorithm}[h]
\caption{OMP for problem (\ref{le:omp})} \label{ap0}
\begin{description}
\item [{\rm Step 0.}] Choose an initial point $\bw^0={\bf 0}$ and $\cs^0=\emptyset$. Let $k:=0$.
\item [{\rm Step 1.}] Find $j_{k+1}\in [N]$ such that
\[
j_{k+1}\in\argmax_{j\in[N]}|\bfe_j^T\Phi^T(\by-\Phi\bw^k)|.
\]
Set $\cs^{k+1}= \cs^k\cup\{j_{k+1}\}$.
\item [{\rm Step 2.}]  Find
\[
\bw^{k+1}=\argmin_{\bw\in\R^Q \;\supp(\bx)\subset \cs^{k+1}} \frac{1}{2}\|\by-\Phi\bw\|_2^2.
\]
\item [{\rm Step 3.}] Replace $k$ by $k+1$ and go to  Step 1.
\end{description}
\end{algorithm}

We see that the OMP algorithm is simple and easy to implement. For more details on the OMP, one may refer to  \cite{CB89,DM94,PR93,T04}. In particular, one may refer to \cite[Proposition 3.5]{FR13} for the exact recovery condition for the OMP.

\subsection{A modified orthogonal matching pursuit}
In this subsection, we propose a MOMP for solving problem \eqref{pbn:ls-s}. It is natural to extend the OMP (i.e., Algorithm \ref{ap0}) to the solution of
problem \eqref{pbn:ls-s}. Compared with problem (\ref{le:omp}), we have additional equality constraint  $\bfo^T\bx=1$ and  nonnegative constraint $\bx\ge{\bf 0}$. Hence, we cannot solve problem \eqref{pbn:ls-s} by the OMP directly. We also note that, for any $1\le j\le N$,    each column of the $j$-th constituent BN matrix $A_j\in\R^{2^n\times 2^n}$ has only one nonzero entry and each column adds up to one. Thus the matrix  $A$ defined by \eqref{def:wu} is entrywise nonnegative, sparse, and satisfies  the property
\BE\label{a:p}
\bfo\ge A\bx,\quad \forall \bx\in\cm,
\EE
where $\cm$ is the feasible domain of  problem \eqref{pbn:ls-s}, which is defined by
\[
\cm:=\Big\{\bx\in\RN\; |\; \bfo^T \bx=1,\; \bx\ge {\bf 0}\Big\}.
\]
In addition, we see that  the prescribed  transition probability matrix $P\in\R^{2^n\times 2^n}$ is usually sparse. Hence, the vector $\bb\in\Rm$ defined by \eqref{def:wu} satisfies  the following property
\BE\label{b:p}
\bfo\ge\bb\ge {\bf 0}.
\EE

From the above analysis, sparked by the OMP  (i.e., Algorithm \ref{ap0}), we propose a MOMP for solving problem (\ref{pbn:ls-s}). The algorithm  is described in Algorithm \ref{ap}.
\begin{algorithm}[h]
\caption{MOMP for problem (\ref{pbn:ls-s})} \label{ap}
\begin{description}
\item [{\rm Step 0.}] Choose an initial guess $\bx^0\in\cm$  and $\cs^0=\emptyset$. Let $k:=0$.
\item [{\rm Step 1.}] Find $j_{k+1}\in [N]$ such that
\[
j_{k+1}\in\argmax_{j\in[N]}\bfe_j^TA^T(\bb-A\bx^k).
\]
Set $\cs^{k+1}= \cs^k\cup\{j_{k+1}\}$.
\item [{\rm Step 2.}]  Find
\BE\label{eq:ck1}
\bx^{k+1}=\argmin_{\substack{\bx\in\cm,\;\supp(\bx)\subset \cs^{k+1}}} \frac{1}{2}\|\bb-A\bx\|_2^2.
\EE
\item [{\rm Step 3.}] Replace $k$ by $k+1$ and go to  Step 1.
\end{description}
\end{algorithm}

We point out that  the major work of Algorithm {\rm \ref{ap}} is to solve a small linear least square problem \eqref{eq:ck1}, which can be solved  via  the standard solvers for constrained linear least square problems, e.g., the interior point algorithm or the active-set algorithm (see for instance \cite{NW06}).
\section{Convergence analysis}  \label{sec3}
In this section, we show that Algorithm {\rm \ref{ap}}  converges in finite steps under some conditions.

For the iterate $j_{k+1}$ generated by Algorithm \ref{ap}, we have the following lemma.
\begin{lemma}\label{lem:jk1}
Let  $\bx^k$ be the current iterate generated by Algorithm {\rm \ref{ap}} with $k\ge 1$. If
\[
A\bfe_{j_{k+1}}=A\bx^k,
\]
then $j_{k+1}\in \cs^{k+1}$ is such that $\bx^{k+1}\in\cm $ with $\supp(\bx^{k+1})\subset \cs^{k+1}$ but
\[
\|\bb-A\bx^{k+1}\|_2=\|\bb-A\bx^{k}\|_2.
\]
Moreover, if $\bx^{k+1}=\bx^{k}$, then $j_{k+1}\in \cs^k$.
\end{lemma}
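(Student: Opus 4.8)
The plan is to treat the two assertions separately, using throughout the convexity of the objective $\varphi(\bx):=\frac{1}{2}\|\bb-A\bx\|_2^2$ and the nested convex feasible sets $\cm_k:=\{\bx\in\cm\;|\;\supp(\bx)\subset\cs^k\}$, so that $\bx^{k+1}$ is by definition the minimizer of $\varphi$ over $\cm_{k+1}$. Two facts are immediate: $j_{k+1}\in\cs^{k+1}$ holds by the rule $\cs^{k+1}=\cs^k\cup\{j_{k+1}\}$ in Step~1, and $\bx^{k+1}\in\cm$ with $\supp(\bx^{k+1})\subset\cs^{k+1}$ holds because $\bx^{k+1}$ is defined in \eqref{eq:ck1} as a minimizer over exactly this set. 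To get the residual identity I would squeeze. For the easy inequality, $\bx^k\in\cm_k\subset\cm_{k+1}$ is feasible in \eqref{eq:ck1}, so optimality of $\bx^{k+1}$ gives $\varphi(\bx^{k+1})\le\varphi(\bx^k)$ (the case $k=0$ is handled directly, since there $\cm_1$ contains the single point $\bfe_{j_1}$, forcing $\bx^1=\bfe_{j_1}$, whose residual equals that of $\bx^0$ by the hypothesis).

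For the reverse inequality I would show that $\bx^k$ is itself optimal over the enlarged set $\cm_{k+1}$. The point $\bfe_{j_{k+1}}$ lies in $\cm_{k+1}$ (it sums to one, is nonnegative, and is supported on $\{j_{k+1}\}$), and writing $\nabla\varphi(\bx^k)=-A^T(\bb-A\bx^k)$ the hypothesis $A\bfe_{j_{k+1}}=A\bx^k$ yields
\[
\nabla\varphi(\bx^k)^T(\bfe_{j_{k+1}}-\bx^k)=-(\bb-A\bx^k)^T\big(A\bfe_{j_{k+1}}-A\bx^k\big)=0 .
\]
Any $\bx\in\cm_{k+1}$ with $\alpha:=x_{j_{k+1}}<1$ decomposes as the convex combination $\bx=(1-\alpha)\bz+\alpha\bfe_{j_{k+1}}$ with $\bz:=(1-\alpha)^{-1}(\bx-\alpha\bfe_{j_{k+1}})\in\cm_k$; hence, using first-order optimality of $\bx^k$ over $\cm_k$ (valid for $k\ge1$) together with the displayed identity,
\[
\nabla\varphi(\bx^k)^T(\bx-\bx^k)=(1-\alpha)\,\nabla\varphi(\bx^k)^T(\bz-\bx^k)\ge 0 .
\]
Convexity of $\varphi$ then gives $\varphi(\bx)\ge\varphi(\bx^k)$ for every $\bx\in\cm_{k+1}$, so $\varphi(\bx^{k+1})\ge\varphi(\bx^k)$, and equality of the residual norms follows.

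For the ``moreover'' claim I would argue that the degenerate condition, combined with the extreme-point structure of the columns of $A$, forces the selected index to be old. Each $\ba_j=\ve(A_j)$ is a vertex of the polytope of column-stochastic matrices (a Cartesian product of simplices), hence cannot be written as a nontrivial convex combination of the remaining columns. Since $A\bfe_{j_{k+1}}=A\bx^k=\sum_{i\in\supp(\bx^k)}x^k_i\ba_i$ exhibits $\ba_{j_{k+1}}$ as a convex combination of the $\ba_i$, $i\in\supp(\bx^k)$, this is possible only if $\ba_i=\ba_{j_{k+1}}$ for every such $i$; assuming the constituent BN matrices are distinct, this pins down $\bx^k=\bfe_{j_{k+1}}$, whence $j_{k+1}\in\supp(\bx^k)\subset\cs^k$. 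Then $\cs^{k+1}=\cs^k$, so the subproblem \eqref{eq:ck1} is unchanged, giving $\bx^{k+1}=\bx^k$, and since the residual $\bb-A\bx^k$ never changes every subsequent selection reproduces $j_{k+1}\in\cs^k$; the iterates stabilize and the algorithm terminates at $\bx^k$. I expect this last step to be the main obstacle: under the hypothesis the new column is convexly dependent on the old ones, so \eqref{eq:cks} has non-unique minimizers and deducing $j_{k+1}\in\cs^k$ from $\bx^{k+1}=\bx^k$ (rather than the reverse) is not automatic; I would therefore make the distinctness and extreme-point structure of the columns of $A$ explicit, or otherwise fix a tie-breaking rule for \eqref{eq:cks}, to keep the deduction rigorous.
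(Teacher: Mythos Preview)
Your argument for the residual identity is correct but follows a different route from the paper. The paper substitutes $A\bfe_{j_{k+1}}=A_{\cs^k}\bx^k_{\cs^k}$ directly: for any $\bx\in\cm_{k+1}$ one has $A\bx=A_{\cs^k}(\bx_{\cs^k}+x_{j_{k+1}}\bx^k_{\cs^k})$, and the coefficient vector lies in $\cz_{\cs^k}$; since conversely every point of $\cz_{\cs^k}$ arises this way (take $x_{j_{k+1}}=0$), the minima over $\cm_{k+1}$ and $\cm_k$ coincide. Your route---decompose $\bx=(1-\alpha)\bz+\alpha\bfe_{j_{k+1}}$ with $\bz\in\cm_k$, then combine first-order optimality of $\bx^k$ on $\cm_k$ with $\nabla\varphi(\bx^k)^T(\bfe_{j_{k+1}}-\bx^k)=0$---reaches the same conclusion via the variational characterization of constrained minima. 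The two arguments are really the same reparametrization viewed from different angles; the paper's is slightly more elementary (no convexity or KKT machinery), while yours makes explicit why the newly added direction is ``useless'' at first order. Your separate treatment of $k=0$ is appropriate; the paper glosses over this case.

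On the ``moreover'' clause you are right to be cautious, and your diagnosis matches the actual situation: the paper's own proof here is essentially a bare assertion (``it is easy to see\ldots\ In this case, we have $j_{k+1}\in\cs^k$''), and as you observe the implication $\bx^{k+1}=\bx^k\Rightarrow j_{k+1}\in\cs^k$ does not follow without further input, since under $A\bfe_{j_{k+1}}=A\bx^k$ the enlarged subproblem has several minimizers (both $\bx^k$ and $\bfe_{j_{k+1}}$, for instance) and the solver could return $\bx^k$ even when $j_{k+1}\notin\cs^k$. Your extreme-point argument is a natural fix and in fact proves more than the lemma asks: under distinctness of the constituent BN matrices, the hypothesis $A\bfe_{j_{k+1}}=A\bx^k$ alone forces $\bx^k=\bfe_{j_{k+1}}$, hence $j_{k+1}\in\supp(\bx^k)\subset\cs^k$, making the additional hypothesis $\bx^{k+1}=\bx^k$ redundant. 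The distinctness assumption is not stated in the paper but is implicit in the enumeration of the $N$ BNs, so this is a reasonable way to make the claim rigorous.
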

\begin{proof}
We note that $\cs^{k+1}=\cs^k\cup\{j_{k+1}\}$, where $\cs^k\neq \emptyset$ since $k\ge 1$. By hypothesis, $A\bfe_{j_{k+1}}=A\bx^k$. Then, without loss of generality, we have
\BE\label{eq:ask1}
A_{\cs^{k+1}}=[A_{\cs^k}, A\bfe_{j_{k+1}}]=[A_{\cs^k}, A\bx^k]=[A_{\cs^k}, A_{\cs^k}\bx_{\cs^k}^k].
\EE
This means that the last column of $A_{\cs^{k+1}}$ is a convex combination of the  columns of $A_{\cs^k}$.
From \eqref{eq:ask1} we have for all $\bx\in\cm $ with $\supp(\bx)\subset \cs^{k+1}$,
\begin{eqnarray}\label{eq:res}
\|\bb-A\bx\|_2&=&\|\bb-A_{\cs^{k+1}}\bx_{\cs^{k+1}}\|_2
=\|\bb-[A_{\cs^k}, A_{\cs^k}\bx_{\cs^k}^k]\bx_{\cs^{k+1}}\|_2 \nonumber\\
&=& \|\bb-A_{\cs^k}(\bx_{\cs^{k}}+x_{j_{k+1}} \bx_{\cs^k}^k)\|_2.
\end{eqnarray}
For any $\bx\in\cm $ with $\supp(\bx)\subset \cs^{k+1}$, it is easy to see that $\bx_{\cs^{k}}+x_{j_{k+1}} \bx_{\cs^k}^k\ge 0$, $\sum_{i\in \cs^k} \big((\bx_{\cs^{k}})_i+x_{j_{k+1}} (\bx_{\cs^k}^k)_i\big)=1$, and $\supp(\bx_{\cs^{k}}+x_{j_{k+1}} \bx_{\cs^k}^k)\subset \cs^{k}$. Notice
\[
\bx^{k}=\argmin_{\substack{\bx\in\cm,\;\supp(\bx)\subset \cs^{k}}} \frac{1}{2}\|\bb-A\bx\|_2^2.
\]
It follows from \eqref{eq:res}  that
\begin{eqnarray}\label{eq:bxk1}
\|\bb-A\bx^{k+1}\|_2&=&\min_{\bx\in\cm,\; \supp(\bx)\subset \cs^{k+1}}\|\bb-A\bx\|_2^2\nonumber\\
&=&\min_{\bx\in\cm,\; \supp(\bx)\subset \cs^{k+1}} \|\bb-A_{\cs^k}(\bx_{\cs^{k}}+x_{j_{k+1}} \bx_{\cs^k}^k)\|_2\nonumber\\
&=& \|\bb-A\bx^{k}\|_2,
\end{eqnarray}
where the last equality holds by setting $\bx_{\cs^{k}}=(1-x_{j_{k+1}})\bx_{\cs^k}^k$ for all $0\le x_{j_{k+1}}\le 1$.

Moreover, it is easy to see that $\bx^{k+1}=\bx^{k}$ is  a special solution  to \eqref{eq:bxk1}. In this case, we have $j_{k+1}\in \cs^k$.
\end{proof}

The following result shows that the choice of the index $j_{k+1}$ is reasonable in the sense that  the residual is nonincreasing.
\begin{theorem}
Let $\{\bx^k\}$ be the sequence generated by Algorithm {\rm \ref{ap}}. Then we have, for all $k\ge 1$,
\[
\|\bb-A\bx^{k+1}\|_2^2
\left\{
\begin{array}{ll}
= \|\bb-A\bx^k\|_2^2, & \mbox{if $A\bfe_{j_{k+1}}=A\bx^k$},\\[3mm]
\leq \|\bb-A\bx^k\|_2^2 -\frac{\Big(\bfe_{j_{k+1}}^TA^T(\bb-A\bx^k)-(\bx^k)^TA^T(\bb-A\bx^k)\Big)^2}
{\|A(\bfe_{j_{k+1}}-\bx^k)\|_2^2}, & \mbox{otherwise}.
\end{array}
\right.
\]
\end{theorem}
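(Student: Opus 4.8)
The plan is to dispose of the two cases separately. The equality case $A\bfe_{j_{k+1}}=A\bx^k$ is already the content of Lemma \ref{lem:jk1}, which gives $\|\bb-A\bx^{k+1}\|_2=\|\bb-A\bx^k\|_2$ and hence the asserted equality of the squared residuals; so the entire work lies in the strict-decrease estimate when $A\bfe_{j_{k+1}}\neq A\bx^k$.

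For that case I would argue by a one–dimensional comparison inside the feasible face indexed by $\cs^{k+1}$. First I would observe that both $\bx^k$ (since $\bx^k\in\cm$ and $\supp(\bx^k)\subset\cs^k\subset\cs^{k+1}$) and $\bfe_{j_{k+1}}$ (since $\bfe^T\bfe_{j_{k+1}}=1$, $\bfe_{j_{k+1}}\ge{\bf 0}$, and $\supp(\bfe_{j_{k+1}})=\{j_{k+1}\}\subset\cs^{k+1}$) are feasible for subproblem \eqref{eq:ck1}. By convexity of $\cm$ the whole segment $\bx(t):=(1-t)\bx^k+t\bfe_{j_{k+1}}$, $t\in[0,1]$, is feasible, and optimality of $\bx^{k+1}$ gives $\|\bb-A\bx^{k+1}\|_2^2\le\|\bb-A\bx(t)\|_2^2$ for every $t\in[0,1]$. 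Writing $\br:=\bb-A\bx^k$ and $\bv:=A(\bfe_{j_{k+1}}-\bx^k)$ (with $\bv\neq{\bf 0}$ precisely because $A\bfe_{j_{k+1}}\neq A\bx^k$), one has $\bb-A\bx(t)=\br-t\bv$, so $\|\bb-A\bx(t)\|_2^2=\|\br\|_2^2-2t\,\br^T\bv+t^2\|\bv\|_2^2$ is a convex quadratic with unconstrained minimizer $t^*=\br^T\bv/\|\bv\|_2^2$ and minimal value $\|\br\|_2^2-(\br^T\bv)^2/\|\bv\|_2^2$. Since $\br^T\bv=\bfe_{j_{k+1}}^TA^T\br-(\bx^k)^TA^T\br$ and $\|\bv\|_2^2=\|A(\bfe_{j_{k+1}}-\bx^k)\|_2^2$, this value is exactly the right-hand side claimed. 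Hence it remains to prove $t^*\in[0,1]$, so that $\bx(t^*)$ is feasible and $\min_{t\in[0,1]}\|\bb-A\bx(t)\|_2^2$ equals that value.

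The inequality $t^*\ge 0$ is the easy half and is exactly where the greedy rule of Step 1 enters: because $(\bx^k)^TA^T\br=\sum_{j}x_j^k(\bfe_j^TA^T\br)$ is a convex combination of the scores $\bfe_j^TA^T\br$ and $j_{k+1}$ maximizes them, we get $\bfe_{j_{k+1}}^TA^T\br\ge(\bx^k)^TA^T\br$, i.e.\ $\br^T\bv\ge0$, while $\|\bv\|_2^2>0$.

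I expect the real obstacle to be the upper bound $t^*\le 1$, equivalently $(\ba_{j_{k+1}}-A\bx^k)^T(\bb-\ba_{j_{k+1}})\le0$; this is the one place where the abstract properties \eqref{a:p}--\eqref{b:p} do not suffice and the full columnwise-stochastic structure of the data must be used. The plan here is to partition the $m=2^{2n}$ coordinates into the $2^n$ blocks corresponding to the columns of the underlying $2^n\times2^n$ matrices: on each block $\bb=\ve(P)$ and $A\bx^k=\ve(\sum_j x_j^kA_j)$ restrict to probability vectors $\bp$ and $\bq$, whereas $\ba_{j_{k+1}}=\ve(A_{j_{k+1}})$ restricts to a one-hot vector $\bfe_\sigma$. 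Decomposing the inner product blockwise, each block contributes $(\bfe_\sigma-\bq)^T(\bp-\bfe_\sigma)=p_\sigma+q_\sigma-1-\bp^T\bq$, and since $\bp^T\bq\ge p_\sigma q_\sigma$ this is at most $-(1-p_\sigma)(1-q_\sigma)\le0$. Summing over the blocks yields $(\ba_{j_{k+1}}-A\bx^k)^T(\bb-\ba_{j_{k+1}})\le0$, hence $t^*\le1$, which closes the estimate.
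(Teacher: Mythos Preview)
Your argument is correct and follows the same overall line as the paper: invoke Lemma~\ref{lem:jk1} for the degenerate case, then restrict to the segment $(1-t)\bx^k+t\bfe_{j_{k+1}}$, expand the quadratic, and verify that the unconstrained minimizer $t^*=\br^T\bv/\|\bv\|_2^2$ lies in $[0,1]$; the lower bound $t^*\ge0$ via the greedy rule is identical to the paper's.

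The only point of divergence is the proof of $t^*\le1$. You set up a block decomposition over the $2^n$ column groups and use the probability-vector structure together with the factorization $p_\sigma+q_\sigma-1-p_\sigma q_\sigma=-(1-p_\sigma)(1-q_\sigma)$. The paper instead argues coordinatewise: since $A\bfe_{j_{k+1}}$ is $\{0,1\}$-valued, on each coordinate $i$ the two factors of $\langle A\bfe_{j_{k+1}}-A\bx^k,\,A\bfe_{j_{k+1}}-\bb\rangle$ have the same sign (both nonnegative when $i\in\supp(A\bfe_{j_{k+1}})$, both nonpositive otherwise) by \eqref{a:p}--\eqref{b:p} and nonnegativity, so the inner product is $\ge0$. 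Thus, contrary to your remark, the full block/stochastic structure is not needed here---the $0$--$1$ nature of the columns of $A$ together with \eqref{a:p}--\eqref{b:p} already suffices, and the paper's coordinatewise check is a bit lighter than your blockwise one. Either way the conclusion is the same.
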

\begin{proof}
For any $0\le t\le 1$, let
\[
\widetilde{\bx}^k:=(1-t)\bx^k+t\bfe_{j_{k+1}}.
\]
It is easy to verify that $\widetilde{\bx}^k\in\cm$ and $\supp(\widetilde{\bx}^k)\subset \cs^{k+1}$.
Thus, for any $0\le t\le 1$,
\begin{eqnarray}\label{res:rk1}
\|\bb-A\bx^{k+1}\|_2^2 &=&\min_{\bx\in\cm,\;\supp(\bx)\subset \cs^{k+1}}\|\bb-A\bx\|_2^2\nonumber\\
&\leq &\| \bb-A\widetilde{\bx}^k \|_2^2 = \| \bb-A\big((1-t)\bx^k+t\bfe_{j_{k+1}}\big)\|_2^2\nonumber\\
&=&  \| (\bb-A\bx^k)-tA(\bfe_{j_{k+1}}-\bx^k)\|_2^2\nonumber\\
&=&\| \bb-A\bx^k\|_2^2+t^2\|A(\bfe_{j_{k+1}}-\bx^k)\|_2^2-2t\langle A(\bfe_{j_{k+1}}-\bx^k),\bb-A\bx^k\rangle.
\end{eqnarray}
If $A\bfe_{j_{k+1}}=A\bx^k$, then using Lemma \ref{lem:jk1} we have
\[
\|\bb-A\bx^{k+1}\|_2=\|\bb-A\bx^{k}\|_2.
\]

We now assume that  $A(\bfe_{j_{k+1}}-\bx^k)\neq {\bf 0}$. From \eqref{res:rk1} we have
\[
\|\bb-A\bx^{k+1}\|_2^2 \le \| \bb-A\bx^k\|_2^2+\|A(\bfe_{j_{k+1}}-\bx^k)\|_2^2\big(t^2-2t\sigma_k\big),
\]
for all $0\le t\le 1$, where
\[
\sigma_k:=\frac{\langle A(\bfe_{j_{k+1}}-\bx^k),\bb-A\bx^k\rangle}{\|A(\bfe_{j_{k+1}}-\bx^k)\|_2^2}
=\frac{\bfe_{j_{k+1}}^TA^T(\bb-A\bx^k)-(\bx^k)^TA^T(\bb-A\bx^k)}{\|A(\bfe_{j_{k+1}}-\bx^k)\|_2^2}.
\]
Thus,
\[
\|\bb-A\bx^{k+1}\|_2^2 \le \| \bb-A\bx^k\|_2^2+\|A(\bfe_{j_{k+1}}-\bx^k)\|_2^2\;\min_{0\le t\le 1}(t^2-2t\sigma_k).
\]

We now show that $0\le \sigma_k\le 1$. We first derive that $\sigma_k\ge 0$. Using the definition of $j_{k+1}$ and $\bx^k\in\cm$ and $\supp(\bx ^k)\subset \cs^k$ we have
\[
(\bx^k)^TA^T(\bb-A\bx^k)\le \bfe_{j_{k+1}}^TA^T(\bb-A\bx^k)\sum_{j\in \cs^k} x_j^k=\bfe_{j_{k+1}}^TA^T(\bb-A\bx^k).
\]
This shows that  $\sigma_k\ge 0$. On the other hand, we note that, if  $\bfe_i^TA\bfe_{j_{k+1}}\neq 0$ for some $1\le i\le m$, then $\bfe_i^TA\bfe_{j_{k+1}}=1$. Thus,
\[
\| A(\bfe_{j_{k+1}}-\bx^k)\|_2^2-\langle A(\bfe_{j_{k+1}}-\bx^k),\bb-A\bx^k\rangle
=\langle A(\bfe_{j_{k+1}}-\bx^k),A\bfe_{j_{k+1}}-\bb\rangle\geq 0,
\]
where the last inequality uses the fact that $\bfe_i^TA(\bfe_{j_{k+1}}-\bx^k) \ge 0$ and $\bfe_i^T(A\bfe_{j_{k+1}}-\bb) \ge 0$ for all $i\in\supp(A\bfe_{j_{k+1}})$ and $\bfe_i^TA(\bfe_{j_{k+1}}-\bx^k) \le 0$ and $\bfe_i^T(A\bfe_{j_{k+1}}-\bb) \le 0$ for all $i\notin\supp(A\bfe_{j_{k+1}})$ by using the properties \eqref{a:p} and  \eqref{b:p}. Therefore, we have $0\le \sigma_k\le 1$. Substituting $t=\sigma_k$ yields
\begin{eqnarray*}
&& \|\bb-A\bx^{k+1}\|_2^2 \le
\| \bb-A\bx^k\|_2^2-\sigma_k^2\|A(\bfe_{j_{k+1}}-\bx^k)\|_2^2\\
&=&  \| \bb-A\bx^k\|_2^2 -\frac{\Big(\bfe_{j_{k+1}}^TA^T(\bb-A\bx^k)-(\bx^k)^TA^T(\bb-A\bx^k)\Big)^2}
{\|A(\bfe_{j_{k+1}}-\bx^k)\|_2^2}.
\end{eqnarray*}
The proof is complete.
\end{proof}

On the optimality conditions of problem \eqref{eq:ck1}, we have the following result from \cite[Theorem 16.4]{NW06}.
\begin{lemma}\label{lem:ck}
Let $\bx^{k+1}$ be the current iterate of  Algorithm {\rm \ref{ap}}. Then $\bx^{k+1}\in\cm$ with $\supp(\bx^{k+1})\subset \cs^{k+1}$  is a global solution to problem \eqref{eq:ck1} if and only if
\[
\Big(A_{\cs^{k+1}}^T\big(\bb-A\bx^{k+1}\big)\Big)_l
\left\{
\begin{array}{ll}
=\big(\bx^{k+1}\big)^TA^T\big(\bb-A\bx^{k+1}\big), & \mbox{if $l\in \supp(\bx^{k+1})$},\\[2mm]
\le\big(\bx^{k+1}\big)^TA^T\big(\bb-A\bx^{k+1}\big), & \mbox{if $l\in \cs^{k+1}\setminus \supp(\bx^{k+1})$}.
\end{array}
\right.
\]
Moreover,  if $A_{\cs^{k+1}}: \cz_{\cs^{k+1}}\to\Rm$ is injective, then $\bx^{k+1}\in\cm$ with $\supp(\bx^{k+1})\subset \cs^{k+1}$  is the unique global solution to problem \eqref{eq:ck1}, where $\cz_{\cs^{k+1}}:=\{\bz\in\R^{|\cs^{k+1}|} \; |\; \bfo^T \bz=1,\; \bz\ge 0\}$.
\end{lemma}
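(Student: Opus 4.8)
The plan is to pass to the reduced problem \eqref{eq:cks} and treat it as a convex quadratic program, for which the Karush--Kuhn--Tucker (KKT) conditions are simultaneously necessary and sufficient for global optimality. Writing $S:=\cs^{k+1}$ and $\bz:=\bx^{k+1}_S$, the objective $g(\bz)=\frac{1}{2}\|\bb-A_S\bz\|_2^2$ has Hessian $A_S^TA_S$, which is positive semidefinite, so $g$ is convex, while the constraints $\bfe^T\bz=1$ and $\bz\ge {\bf 0}$ defining $\cz_S$ are linear. Because all constraints are linear, a constraint qualification holds automatically, and \cite[Theorem 16.4]{NW06} applies verbatim: $\bz$ is a global minimizer over $\cz_S$ (equivalently, $\bx^{k+1}$ solves \eqref{eq:ck1}) if and only if there exist a scalar multiplier $\mu\in\R$ for the equality constraint and a multiplier vector $\nu\ge{\bf 0}$ for the nonnegativity constraints satisfying the KKT system.

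First I would write this system down explicitly. Since $\nabla g(\bz)=-A_S^T(\bb-A_S\bz)$, stationarity reads $A_S^T(\bb-A_S\bz)=-\mu\bfe-\nu$, i.e. componentwise $\big(A_S^T(\bb-A_S\bz)\big)_l=-\mu-\nu_l$, together with complementary slackness $\nu_l z_l=0$ and $\nu\ge{\bf 0}$. The crucial step, which converts this generic system into the threshold form asserted in the statement, is to eliminate $\mu$. Taking the inner product of the stationarity relation with $\bz$ and using $\bfe^T\bz=1$ together with complementary slackness ($z_l>0\Rightarrow\nu_l=0$) yields $\bz^TA_S^T(\bb-A_S\bz)=-\mu$; since $A\bx^{k+1}=A_S\bz$ and $\supp(\bx^{k+1})\subset S$, this is exactly $(\bx^{k+1})^TA^T(\bb-A\bx^{k+1})=-\mu$. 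Substituting $-\mu=(\bx^{k+1})^TA^T(\bb-A\bx^{k+1})$ back into the stationarity relation then gives an equality on $\supp(\bx^{k+1})$ (where $\nu_l=0$) and the inequality $\le$ on $S\setminus\supp(\bx^{k+1})$ (where $\nu_l\ge0$), which is precisely the claimed condition; this proves the ``only if'' direction.

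For the ``if'' direction I would reverse the construction: given a feasible $\bx^{k+1}$ satisfying the stated condition, I set $\mu:=-(\bx^{k+1})^TA^T(\bb-A\bx^{k+1})$ and $\nu_l:=-\mu-\big(A_S^T(\bb-A\bx^{k+1})\big)_l$, and then verify directly that $\nu\ge{\bf 0}$, that $\nu_lz_l=0$, and that stationarity holds, so that \cite[Theorem 16.4]{NW06} certifies $\bx^{k+1}$ as a global solution. For uniqueness under the injectivity hypothesis, I would observe that $g$ depends on $\bz$ only through $A_S\bz$ and that $\frac{1}{2}\|\bb-\,\cdot\,\|_2^2$ is strictly convex; hence the minimizing value of $A_S\bz$ over the convex set $\{A_S\bz:\bz\in\cz_S\}$ is uniquely determined, and injectivity of $A_S$ on $\cz_S$ forces $\bz$, and therefore $\bx^{k+1}$, to be unique.

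The main obstacle is the algebraic elimination of the equality multiplier $\mu$: recognizing that $\bz^TA_S^T(\bb-A_S\bz)$ equals $-\mu$, by way of $\bfe^T\bz=1$ and complementary slackness, is exactly what collapses the two families of KKT multipliers into the single scalar threshold $(\bx^{k+1})^TA^T(\bb-A\bx^{k+1})$ appearing in the statement. Once this identity is in hand, both directions and the appeal to \cite[Theorem 16.4]{NW06} are routine.
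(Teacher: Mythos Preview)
Your proposal is correct and follows exactly the route indicated by the paper: the paper does not give an independent proof of this lemma but simply records it as a consequence of \cite[Theorem~16.4]{NW06}. You have supplied the details the paper omits---namely, writing out the KKT system for the reduced convex quadratic program \eqref{eq:cks}, eliminating the equality multiplier $\mu$ via the identity $\bz^TA_S^T(\bb-A_S\bz)=-\mu$ (which uses $\bfe^T\bz=1$ and complementary slackness), and then reading off the threshold form of the optimality condition; your uniqueness argument via strict convexity of $\frac12\|\bb-\cdot\|_2^2$ together with injectivity of $A_S$ on $\cz_S$ is likewise the standard one.
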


We now discuss the convergence conditions for the MOMP. We first  give some necessary conditions for Algorithm \ref{ap} to  recover a sparse solution to the linear system $\bb=A\bx$. The proof can be seen as a generalization of \cite[Proposition 3.5]{FR13}.
\begin{theorem}\label{thm:nec}
Let $A\in \RmN$ and $\bb\in\Rm$ be defined by \eqref{def:wu}. Suppose  every nonzero vector $\bx^*\in\cm$ supported on a set $\cs$ of size $d$  is recovered from $\bb=A\bx^*$ via  Algorithm {\rm \ref{ap}} with any fixed starting point $\bx^0\in\cm$  after at most $d$ iterations. Then the linear operator $A_\cs:\cz_\cs\to\Rm$ is injective,
\BE\label{nec1}
\max_{j\in \cs}\big(A^T(\bb-A\bx^0)\big)_j > \max_{l\in [N]\backslash\cs}\big(A^T(\bb-A\bx^0)\big)_l,
\EE
for all $\bb\in\{A\bx \; |\; \bx\in\cm,\; \supp(\bx)\subset \cs\}$, where $\cz_\cs:=\{\bz\in\R^{|\cs|} \; |\; \bfo^T \bz=1,\; \bz\ge 0\}$.
\end{theorem}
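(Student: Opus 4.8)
The plan is to establish the two asserted necessary conditions separately, following the template of the exact-recovery characterization for the OMP in \cite[Proposition 3.5]{FR13} but adapting it to the simplex feasible set $\cm$ and the signed selection rule of Algorithm \ref{ap}. Throughout I would treat the output of Algorithm \ref{ap} on a given input $\bb$ (with the fixed initial guess $\bx^0$) as determined by $\bb$, and I would use the elementary bookkeeping fact that, since $\cs^0=\emptyset$ and each iteration enlarges the active set by at most one index, the final active set $\cs^{(d)}$ produced after at most $d$ iterations satisfies $|\cs^{(d)}|\le d$.

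First I would prove injectivity of $A_\cs:\cz_\cs\to\Rm$ by an ambiguity argument. Suppose it fails: then there are distinct $\bz_1,\bz_2\in\cz_\cs$ with $A_\cs\bz_1=A_\cs\bz_2$. Extending each $\bz_i$ by zeros outside $\cs$ produces distinct vectors $\bx_1^*,\bx_2^*\in\cm$, both supported on $\cs$, with $A\bx_1^*=A\bx_2^*=:\bb$ (feasibility follows since $\bfe^T\bz_i=1$ and $\bz_i\ge {\bf 0}$). Because Algorithm \ref{ap} returns a single vector on the input $\bb$, that output cannot equal both $\bx_1^*$ and $\bx_2^*$, so recovery fails for at least one of them, contradicting the hypothesis. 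Hence $A_\cs$ is injective on $\cz_\cs$.

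Next I would derive \eqref{nec1} from the behaviour of the mandatory first iteration. Take any $\bx^*\in\cm$ whose support is \emph{all} of $\cs$ and set $\bb=A\bx^*$. Recovery means the output equals $\bx^*$, so $\cs=\supp(\bx^*)\subset\cs^{(d)}$; combined with $|\cs^{(d)}|\le d=|\cs|$ this forces $\cs^{(d)}=\cs$, hence every selected index lies in $\cs$, in particular $j_1\in\cs$. Since $j_1=\argmax_{j\in[N]}\big(A^T(\bb-A\bx^0)\big)_j$, this gives $\max_{j\in\cs}\big(A^T(\bb-A\bx^0)\big)_j\ge\max_{l\in[N]\backslash\cs}\big(A^T(\bb-A\bx^0)\big)_l$ for every such $\bb$, i.e. \eqref{nec1} with $\ge$ in place of the strict inequality.

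The main obstacle is upgrading this to the strict inequality in \eqref{nec1} and extending it from full-support targets to all $\bb\in\{A\bx\;|\;\bx\in\cm,\;\supp(\bx)\subset\cs\}$. For strictness I would argue that the selection $j_1\in\cs$ must be unambiguous: if some $l^*\in[N]\backslash\cs$ attained the maximum, then $l^*$ is an admissible value of $\argmax$, so a first step into $[N]\backslash\cs$ is possible, which for a full-support target (where no iteration may be wasted) destroys recovery. Because each correlation $\bb\mapsto\big(A^T(\bb-A\bx^0)\big)_l$ is affine in $\bb$ and $\bb$ ranges over $A_\cs(\cz_\cs)$, I would rule out such a tie by perturbing $\bx^*$ within the relative interior of the $\cs$-simplex along a tangent direction $\bv$ with $\bfe^T\bv=0$ chosen so that $\ba_{l^*}^TA_\cs\bv$ strictly exceeds the corresponding change $\ba_j^TA_\cs\bv$ of the $\cs$-correlations, thereby making $l^*$ the strict maximizer for a nearby full-support target and contradicting $j_1\in\cs$ there. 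Finally, since full-support vectors are dense in $\{\bx\in\cm\;|\;\supp(\bx)\subset\cs\}$ and the correlations depend continuously (indeed affinely) on $\bb$, the inequality passes to the boundary. The delicate points I expect to fight with are the tie-breaking convention hidden in $\argmax$ and the possibility that a degenerate tie persists under all admissible perturbations; the latter I would trace back to the injectivity of $A_\cs$ just established, which forbids two indices from acting identically on all of $A_\cs(\cz_\cs)$.
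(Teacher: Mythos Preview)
Your overall route matches the paper's: injectivity via the impossibility of two distinct targets sharing the same observation $\bb$, and \eqref{nec1} via the requirement that the first selected index $j_1$ lie in $\cs$. The paper's proof is far more compressed than yours; it simply asserts that for every $\bb=A\bx^*$ with $\supp(\bx^*)\subset\cs$ the index $j_1$ cannot fall in $[N]\backslash\cs$, and reads off the strict inequality directly, without separating full-support from boundary targets and without any perturbation or continuity step.

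Your more careful treatment surfaces real subtleties, but the added machinery introduces a genuine gap. The density argument in your final step cannot deliver \eqref{nec1} on the boundary: a limit of strict inequalities is only non-strict, so affine (hence continuous) dependence of the correlations on $\bb$ gives you at best $\ge$ for targets with $|\supp(\bx^*)|<d$, not $>$. The perturbation step is also shaky: choosing a tangent $\bv$ so that $\ba_{l^*}^TA_\cs\bv$ strictly exceeds $\ba_j^TA_\cs\bv$ for every $j\in\cs$ may be impossible (for instance, if $\ba_{l^*}$ lies in the convex hull of $\{\ba_j:j\in\cs\}$ then $\ba_{l^*}^TA_\cs\bv$ is a convex combination of those values and can never strictly dominate all of them), and injectivity of $A_\cs$ on $\cz_\cs$ says nothing about how a column outside $\cs$ relates to the columns inside. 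Your first, direct argument---a tie makes $l^*\in[N]\backslash\cs$ an admissible $\argmax$, and for a full-support target this wastes one of the $d$ slots and kills recovery---already yields the strict inequality at interior targets and is essentially what the paper invokes (tersely) for \emph{all} $\bb$; neither your proposal nor the paper fully justifies why the strict inequality must persist at boundary targets, where a mis-step at $k=1$ need not preclude recovery within $d$ iterations.
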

\begin{proof}
Suppose Algorithm \ref{ap} recovers all vectors supported on a set $\cs$ of size $d$ at most $d$ iterations. Then, for  any two vectors $\bx_1,\bx_2\in\cm$ supported on $\cs$ with $A\bx_1=\bb=A\bx_2$, we must have  $\bx_1=\bx_2$. This shows that  the linear operator $A_\cs:\cz_\cs\to\Rm$  is injective.
On the other hand, if there exists a vector $\bx^*\in\cm$ with $\supp(\bx^*)\subset\cs$ such that $\bb=A\bx^*$, then the index $j_1$ generated by  Algorithm \ref{ap} at the first iteration should not belong to $[N]\backslash\cs$, i.e., $\max_{j\in \cs}\big(A^T(\bb-A\bx^0)\big)_j > \max_{l\in [N]\backslash\cs}\big(A^T(\bb-A\bx^0)\big)_l$. Therefore, we have $\max_{j\in \cs}(A^T(\bb-A\bx^0))_j > \max_{l\in [N]\backslash\cs}(A^T(\bb-A\bx^0))_l$ for all $\bb\in\{A\bx \; |\; \bx\in\cm,\; \supp(\bx)\subset \cs\}$. This completes the proof.
\end{proof}

Next, we provide some sufficient conditions to guarantee Algorithm \ref{ap} recovers all sparse solutions of the linear system $\bb=A\bx$ exactly. The proof can be seen as a generalization of \cite[Proposition 3.5]{FR13}.
\begin{theorem}\label{thm:sc}
Let $A\in \RmN$ and $\bb\in\Rm$ be defined by \eqref{def:wu}. Then  every nonzero vector $\bx^*\in\cm$ supported on a set $\cs$ of size $d$  is recovered from $\bb=A\bx^*$ via  Algorithm {\rm \ref{ap}} with any fixed starting point $\bx^0\in\cm$   after at most $d$ iterations if the linear operator $A_\cs:\cz_\cs\to\Rm$ is injective,
\BE\label{sc1}
\max_{j\in \cs}\big(A^T(\bb-A\bx^0)\big)_j > \max_{l\in [N]\backslash\cs}\big(A^T(\bb-A\bx^0)\big)_l,
\EE
and
\BE\label{sc2}
\max_{j\in \cs}\big(A^T(\bb-A\bx)\big)_j > \max_{l\in [N]\backslash\cs}\big(A^T(\bb-A\bx)\big)_l,
\EE
for all $\bx\in\cm_\cs:=\{\bx\in\cm \; |\; \supp(\bx)\subset \cs\}\backslash \{\bx^*\}$, where $\cz_\cs:=\{\bz\in\R^{|\cs|} \; |\; \bfo^T \bz=1,\; \bz\ge 0\}$.
\end{theorem}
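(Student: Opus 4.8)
The plan is to follow the inductive skeleton of the OMP exact-recovery proof \cite[Proposition 3.5]{FR13}, replacing the orthogonality property of the OMP residual by the Karush--Kuhn--Tucker characterisation of the constrained least-squares iterate supplied by Lemma \ref{lem:ck}. Concretely, I would prove by induction on $k$ that, as long as the current iterate has not reached $\bx^*$, one has $\cs^k\subseteq\cs$ with $|\cs^k|=k$; in words, every iteration appends a \emph{new} and \emph{correct} index. The base case $k=1$ is immediate: as $\cs^0=\emptyset$, hypothesis \eqref{sc1} forces $j_1=\argmax_{j\in[N]}(A^T(\bb-A\bx^0))_j$ into $\cs$, whence $\cs^1=\{j_1\}\subseteq\cs$ and $|\cs^1|=1$.

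For the inductive step I would assume $\cs^k\subseteq\cs$ and $\bx^k\neq\bx^*$ (if $\bx^k=\bx^*$ the residual is zero and recovery has already occurred). Since $\supp(\bx^k)\subseteq\cs^k\subseteq\cs$ and $\bx^k\neq\bx^*$, we have $\bx^k\in\cm_\cs$, so hypothesis \eqref{sc2} places the next index $j_{k+1}$ in $\cs$. It remains to check $j_{k+1}\notin\cs^k$. Setting $\gamma_k:=(\bx^k)^TA^T(\bb-A\bx^k)$, Lemma \ref{lem:ck} gives $(A^T(\bb-A\bx^k))_l=\gamma_k$ for $l\in\supp(\bx^k)$ and $(A^T(\bb-A\bx^k))_l\le\gamma_k$ for all $l\in\cs^k$; hence it is enough to prove the strict inequality $\max_{j\in\cs}(A^T(\bb-A\bx^k))_j>\gamma_k$, because then the global maximiser $j_{k+1}$ cannot belong to $\cs^k$.

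This strict inequality is the heart of the matter, and the step I expect to be the main obstacle. I would argue it by contraposition. If instead $\max_{j\in\cs}(A^T(\bb-A\bx^k))_j=\gamma_k$, then $(A^T(\bb-A\bx^k))_l\le\gamma_k$ for every $l\in\cs$, and together with the equalities on $\supp(\bx^k)$ this is exactly the optimality system of Lemma \ref{lem:ck} for problem \eqref{eq:ck1} posed over the full set $\cs$. Thus $\bx^k$ would be a global minimiser of $\tfrac12\|\bb-A\bx\|_2^2$ over $\{\bx\in\cm:\supp(\bx)\subseteq\cs\}$; but $\bx^*$ is feasible for this problem and attains the value $0$, so it too is a global minimiser, and the injectivity of $A_\cs:\cz_\cs\to\Rm$ (the uniqueness clause of Lemma \ref{lem:ck}) forces $\bx^k=\bx^*$, contradicting $\bx^k\neq\bx^*$. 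Hence $j_{k+1}$ is a genuinely new correct index, so $\cs^{k+1}\subseteq\cs$ and $|\cs^{k+1}|=k+1$, closing the induction.

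Termination is then read off directly. Until recovery occurs each step adds one new index of $\cs$, so after at most $d=|\cs|$ iterations either some iterate already equals $\bx^*$ or $\cs^d=\cs$. In the latter situation the minimisation \eqref{eq:ck1} runs over $\{\bx\in\cm:\supp(\bx)\subseteq\cs\}$; since $\bx^*$ realises residual $0$ and $A_\cs:\cz_\cs\to\Rm$ is injective, Lemma \ref{lem:ck} yields $\bx^d=\bx^*$. Therefore Algorithm \ref{ap} recovers $\bx^*$ in at most $d$ iterations, as claimed.
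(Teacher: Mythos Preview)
Your proposal is correct and follows essentially the same approach as the paper: both argue by induction that $\cs^k\subseteq\cs$ with $|\cs^k|=k$, using \eqref{sc1} for the initial step, \eqref{sc2} for correctness of subsequent indices, and the KKT characterisation of Lemma~\ref{lem:ck} together with the injectivity of $A_\cs$ to derive the contradiction $\bx^k=\bx^*$ should a selected index ever repeat. The only cosmetic difference is that the paper first proves $\cs^k\subseteq\cs$ for all $k$ and then separately shows $j_{k+1}\notin\cs^k$, whereas you fold both claims into a single induction.
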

\begin{proof}
Suppose the starting point $\bx^0\in \cm$  with $\supp(\bx^0)\not\subset\cs$ is such that $\bb=A\bx^0$. This contradicts (\ref{sc1}).
We now assume that $\bb\neq A\bx^k$ for $k=1,\ldots, d-1$ (otherwise, we have found the solution).
We claim that, for any $1\le k\le d$, $\cs^k\subset \cs$ is of size $k$. Therefore $\cs=\cs^d$ and $\bx^*=\bx^d$ since  the linear operator $A_\cs:\cz_\cs\to\Rm$  is injective. In the following, we show the claim  by the induction. We first show that, for any $1\le k\le d$, $\cs^k\subset\cs$ (which implies that $\bx^k\in \cm$  with $\supp(\bx^k)\subset\cs^k$).  Using (\ref{sc1}), we know that the first index $j_1$ must belong to $\cs$ and thus $\cs^1=\cs^0\cup \{j_1\}= \{j_1\}\subset \cs$. Now, suppose $\cs^k\subset\cs$ for some $1\le k\le d-1$. Then, using \eqref{sc2} we have the index $j_{k+1}\in \cs$ and thus $\cs^{k+1}=\cs^k\cup\{j_{k+1}\}\subset \cs$. By the induction, we have $S^k\subset S$ for all $1\le k\le d$. Next, we show that $\cs^k$ is of size $k$ for all  $1\le k\le d$. For any $1\le k\le d$, using Lemma \ref{lem:ck} we have
\[
\Big(A_{\cs^{k}}^T\big(\bb-A\bx^{k}\big)\Big)_l
\left\{
\begin{array}{ll}
=\big(\bx^{k}\big)^TA^T\big(\bb-A\bx^{k}\big), & \mbox{if $l\in \supp(\bx^{k})$},\\[2mm]
\le\big(\bx^{k}\big)^TA^T\big(\bb-A\bx^{k}\big), & \mbox{if $l\in \cs^{k}\setminus \supp(\bx^{k})$}.
\end{array}
\right.
\]
By definition, $j_{k+1}=\argmax_{j\in[N]}\bfe_j^TA^T(\bb-A\bx^k)\notin \cs^k$. Otherwise,  if $j_{k+1}\in \cs^k$, then it follows from  \eqref{sc2} that
\[
\max_{j\in \cs^k}\big(A^T(\bb-A\bx^k)\big)_j=\max_{j\in \cs}\big(A^T(\bb-A\bx^k)\big)_j > \max_{l\in [N]\backslash\cs}\big(A^T(\bb-A\bx^k)\big)_l.
\]
Thus,
\[
\Big(A^T\big(\bb-A\bx^{k}\big)\Big)_l
\left\{
\begin{array}{ll}
=\big(\bx^{k}\big)^TA^T\big(\bb-A\bx^{k}\big), & \mbox{if $l\in \supp(\bx^{k})$},\\[2mm]
\le\big(\bx^{k}\big)^TA^T\big(\bb-A\bx^{k}\big), & \mbox{if $l\in \cs^{k}\setminus \supp(\bx^{k})$},\\[2mm]
\le\big(\bx^{k}\big)^TA^T\big(\bb-A\bx^{k}\big), & \mbox{if $l\in \cs\setminus \cs^{k}$},\\[2mm]
<\big(\bx^{k}\big)^TA^T\big(\bb-A\bx^{k}\big), & \forall\; l\in[N]\backslash\cs.
\end{array}
\right.
\]
Using Lemma \ref{lem:ck} and the injectivity of the linear operator $A_\cs:\cz_\cs\to\Rm$, we know that  $\bx^k$ is the unique global solution to problem \eqref{pbn:ls-s}. By assumption, $\bx^*\in\cm$ with $\supp(\bx^*)\subset\cs$ is such that $\bb=A\bx^*$, which is a global solution to problem \eqref{pbn:ls-s}. Thus $\bx^*=\bx^k$. This is a  contradiction. Therefore, $\cs^k$ is of size $k$. The proof is complete.
\end{proof}

\begin{remark}\label{rem:nscond}
We observe that the necessary conditions in Theorem {\rm \ref{thm:nec}} are not equivalent to the sufficient conditions in Theorem {\rm\ref{thm:sc}}.  This may be caused by the additional constraints:  $\bfo^T\bx=1$ and $\bx\ge{\bf 0}$. By assumptions, $\bb=A\bx^*$ for every exact recovery $\bx^*\in\cm$ supported on a set $\cs$ of size $d$. Then $\bb$ belongs to the set $\{A\bx \; |\; \bx\in\cm,\; \supp(\bx)\subset \cs\}$. While, for any $1\le k\le d-1$, we have $\cs^k\subset \cs$ but the residual $\br^k:=\bb-A\bx^k=A(\bx^*-\bx^k+\bx^0)-A\bx^0$, where $A(\bx^*-\bx^k+\bx^0)$ is not guaranteed to belong to the set $\{A\bx \; |\; \bx\in\cm,\; \supp(\bx)\subset \cs\}$ since  the support of $\bx^0$ is not necessary on $\cs$ and the entrywise nonnegativity of the vector $(\bx^*-\bx^k+\bx^0)$ is not guaranteed.
\end{remark}

By following the similar proof of  Theorem \ref{thm:sc}, we have the following sufficient conditions on the sparse recovery of Algorithm {\rm \ref{ap}} for problem \eqref{pbn:ls-s}.
\begin{theorem}\label{thm:sc-ls}
Let $A\in \RmN$ and $\bb\in\Rm$  be defined by \eqref{def:wu}.  Then every nonzero vector $\bx^*\in\cm$ supported on a set $\cs$ of size $d$  solve problem \eqref{pbn:ls-s} via  Algorithm {\rm \ref{ap}} after at most $d$ iterations if the linear operator $A_\cs:\cz_\cs\to\Rm$ is injective,
\[
\max_{j\in \cs}\big(A^T(\bb-A\bx^0)\big)_j > \max_{l\in [N]\backslash\cs}\big(A^T(\bb-A\bx^0)\big)_l,
\]
and
\[
\max_{j\in \cs}\big(A^T(\bb-A\bx)\big)_j > \max_{l\in [N]\backslash\cs}\big(A^T(\bb-A\bx)\big)_l,
\]
for all $\bx\in\cm_\cs$, where $\cm_\cs$ and $\cz_\cs$ are defined as in Theorem {\rm \ref{thm:sc}}.
\end{theorem}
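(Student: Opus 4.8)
The plan is to mirror the finite-step induction in the proof of Theorem \ref{thm:sc}, adapting it to the fact that $\bb$ need no longer lie in $\{A\bx\mid\bx\in\cm,\ \supp(\bx)\subset\cs\}$, so the residual $\bb-A\bx^*$ at the target is in general nonzero. First I would fix the global least square solution $\bx^*\in\cm$ supported on $\cs$ and assume, for contradiction, that $\bx^k\neq\bx^*$ for all $k=0,1,\ldots,d-1$ (otherwise the algorithm has already returned $\bx^*$). As before, the heart of the argument is the claim that $\cs^k\subset\cs$ and $|\cs^k|=k$ for every $0\le k\le d$. Granting it, $\cs^d=\cs$; since $\bx^*$ minimizes $\frac{1}{2}\|\bb-A\bx\|_2^2$ over all of $\cm$ and is supported on $\cs$, it is also optimal for the support-restricted problem \eqref{eq:ck1} on $\cs$, and the injectivity of $A_\cs:\cz_\cs\to\Rm$ makes that restricted minimizer unique, forcing $\bx^*=\bx^d$.

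For the inclusion $\cs^k\subset\cs$ I would reuse the argument of Theorem \ref{thm:sc}: condition \eqref{sc1} places $j_1$ in $\cs$, and condition \eqref{sc2}, evaluated at $\bx=\bx^k\in\cm_\cs$ (admissible precisely because $\bx^k\neq\bx^*$), places $j_{k+1}$ in $\cs$ at the inductive step. The genuinely new work is the size count $|\cs^k|=k$, that is, $j_{k+1}\notin\cs^k$. I would argue by contradiction: if $j_{k+1}\in\cs^k$, then the global maximizer of $\bfe_j^TA^T(\bb-A\bx^k)$ over $j\in[N]$ sits inside $\cs^k$, so the restricted optimality conditions of Lemma \ref{lem:ck} bound every coordinate of $A^T(\bb-A\bx^k)$ above by the constant $(\bx^k)^TA^T(\bb-A\bx^k)$, while \eqref{sc2} upgrades this to a strict bound on $[N]\backslash\cs$. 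This is exactly the global (all $l\in[N]$) version of the optimality condition in Lemma \ref{lem:ck} for \eqref{pbn:ls-s}, so $\bx^k$ is a global least square solution.

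The main obstacle, and the only real departure from the consistent case of Theorem \ref{thm:sc}, is promoting this to \emph{uniqueness}. In the exact case one simply invokes $\bb=A\bx^*$; here I would instead exploit that the objective depends on $\bx$ only through $A\bx$ and is strictly convex in $A\bx$, so all global minimizers share the same residual, hence the same vector $A^T(\bb-A\bx^k)$ and the same optimality constant. The strict inequality on $[N]\backslash\cs$ then forces, by complementary slackness, every global minimizer to vanish outside $\cs$, and the injectivity of $A_\cs:\cz_\cs\to\Rm$ pins down a single such minimizer on $\cm_\cs$. Thus $\bx^k$ is the unique global least square solution, so $\bx^k=\bx^*$, contradicting $\bx^k\neq\bx^*$. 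This yields $j_{k+1}\notin\cs^k$, completes the size count, and finishes the proof.
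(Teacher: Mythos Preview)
Your proposal is correct and follows essentially the same route as the paper, which simply says the result follows ``by following the similar proof of Theorem~\ref{thm:sc}'' without further detail. Your write-up supplies the one adaptation that is genuinely needed in the least-square setting---namely, replacing the trivial identification $A\bx^k=\bb=A\bx^*$ by the strict-convexity argument that any two global minimizers must share the same image $A\bx$, after which injectivity of $A_\cs$ on $\cz_\cs$ yields $\bx^k=\bx^*$---and this is exactly the missing step one has to insert when transcribing the proof of Theorem~\ref{thm:sc}.
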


\begin{remark}\label{rem:2}
In Theorems {\rm \ref{thm:nec}}--{\rm \ref{thm:sc-ls}}, we require that the injectivity of the linear operator $A_\cs:\cz_\cs\to\Rm$, which is guaranteed if $A_\cs:\R^{|\cs|}\to\Rm$ is injective i.e., $A_\cs$ is full column rank. We note that $\cz_\cs$ is a closed convex subset of  $\R^{|\cs|}$. It is easy to see that if $|\cs|>m$, then the linear operator $A_\cs:\R^{|\cs|}\to\Rm$ cannot be injective. This shows that, if Algorithm {\rm \ref{ap}} generates a sparse solution to problem \eqref{pbn:ls-s}, then the sparsity is no more than $m$. 
\end{remark}

Based on   Theorems {\rm \ref{thm:sc}}--{\rm \ref{thm:sc-ls}} and Remark \ref{rem:2}, for Algorithm {\rm \ref{ap}}, we have the following results on the recovery  with a given support for problem \eqref{pbn:ls-s}  exactly or in the least square sense.
\begin{corollary}\label{cor:ex}
Let $A\in \RmN$ and $\bb\in\Rm$  be defined by \eqref{def:wu}.  Let $\cs\subset [N]$ with $d=|\cs|$. Then every nonzero vector $\bx^*\in\cm$  with $\supp(\bx^*)\subset\cs$  is recovered from  $\bb=A\bx^*$ via Algorithm {\rm \ref{ap}}  after at most $d$ iterations if  $A_\cs$ has full column rank and
\[
\max_{j\in \cs}\big(A^T(\bb-A\bx^k)\big)_j > \max_{l\in [N]\backslash\cs}\big(A^T(\bb-A\bx^k)\big)_l,
\]
for $k=0,1,\ldots, d-1$.
\end{corollary}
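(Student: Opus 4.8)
The plan is to derive the corollary directly from Theorem \ref{thm:sc}, the point being that the proof of that theorem invokes its separation hypotheses \eqref{sc1}--\eqref{sc2} \emph{only} at the particular iterates produced by Algorithm \ref{ap}. First I would dispose of the structural requirement: by Remark \ref{rem:2}, full column rank of $A_\cs$ means $A_\cs:\R^{|\cs|}\to\Rm$ is injective, hence so is its restriction to the closed convex subset $\cz_\cs$, i.e. $A_\cs:\cz_\cs\to\Rm$ is injective. This verifies the injectivity hypothesis of Theorem \ref{thm:sc}, and it then remains to feed the two separation inequalities into the argument at exactly the places where they are consumed.

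Next I would retrace the induction of Theorem \ref{thm:sc}, keeping track of which vector each inequality is evaluated at. As there, assume $\bb\neq A\bx^k$ for $k=0,1,\ldots,d-1$ (otherwise the solution is already found), so that the iterates $\bx^0,\ldots,\bx^{d-1}$ are the genuine outputs of the algorithm and the per-iterate hypothesis is a well-posed condition on them. The claim to establish by induction is $\cs^k\subset\cs$ and $|\cs^k|=k$ for $0\le k\le d$. Support inclusion is immediate: the Corollary's hypothesis at $\bx^k$ says that $\max_{j\in\cs}(A^T(\bb-A\bx^k))_j$ strictly exceeds the maximum over $[N]\backslash\cs$, which forces the global maximizer $j_{k+1}=\argmax_{j\in[N]}\bfe_j^TA^T(\bb-A\bx^k)$ into $\cs$, whence $\cs^{k+1}\subset\cs$. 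Since $\cs^k\subset\cs$ and $\bx^k\neq\bx^*$ (because $\bb\neq A\bx^k$), each such $\bx^k$ lies in $\cm_\cs$, so the per-iterate hypothesis is precisely the instance of \eqref{sc1}--\eqref{sc2} that the original proof uses.

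The main obstacle, exactly as in Theorem \ref{thm:sc}, is to rule out repeated indices, i.e. to show $|\cs^{k+1}|=k+1$, equivalently $j_{k+1}\notin\cs^k$. Here I would argue by contradiction: if $j_{k+1}\in\cs^k$, then combining the optimality characterization of $\bx^k$ over $\cs^k$ from Lemma \ref{lem:ck} with the per-iterate separation inequality extends the optimality relations from $\cs^k$ to all of $[N]$, yielding
\[
\Big(A^T\big(\bb-A\bx^{k}\big)\Big)_l
\left\{
\begin{array}{ll}
=\big(\bx^{k}\big)^TA^T\big(\bb-A\bx^{k}\big), & l\in \supp(\bx^{k}),\\[1mm]
\le\big(\bx^{k}\big)^TA^T\big(\bb-A\bx^{k}\big), & l\in \cs\setminus \supp(\bx^{k}),\\[1mm]
<\big(\bx^{k}\big)^TA^T\big(\bb-A\bx^{k}\big), & l\in[N]\backslash\cs.
\end{array}
\right.
\]
By Lemma \ref{lem:ck} these are the optimality conditions for $\bx^k$ to be a global minimizer of problem \eqref{pbn:ls-s}; but $\bx^*$ is a global minimizer with zero residual, so the minimal value is zero and $A\bx^k=\bb=A\bx^*$. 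As both $\bx^k$ and $\bx^*$ are supported on $\cs$, injectivity of $A_\cs:\cz_\cs\to\Rm$ gives $\bx^k=\bx^*$, contradicting $\bb\neq A\bx^k$. This establishes $|\cs^k|=k$.

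Finally, taking $k=d$ gives $\cs^d\subset\cs$ with $|\cs^d|=d=|\cs|$, hence $\cs^d=\cs$. The iterate $\bx^d$ then solves \eqref{eq:ck1} over the support $\cs$, and since the feasible $\bx^*$ attains zero residual, so does $\bx^d$, whence $A\bx^d=\bb=A\bx^*$ and injectivity of $A_\cs:\cz_\cs\to\Rm$ forces $\bx^d=\bx^*$; thus the recovery is exact after at most $d$ iterations. The only genuinely new ingredient relative to Theorem \ref{thm:sc} is the bookkeeping remark that its proof never uses \eqref{sc1}--\eqref{sc2} anywhere except at the iterates $\bx^0,\ldots,\bx^{d-1}$, which is exactly what permits weakening the universal hypothesis over $\cm_\cs$ to the stated per-iterate one.
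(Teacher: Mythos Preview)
Your proof is correct and follows exactly the route the paper intends: the Corollary is stated without explicit proof, merely as a consequence of Theorem~\ref{thm:sc} together with Remark~\ref{rem:2}, and your argument makes precise the two points that justify this---full column rank of $A_\cs$ yields the required injectivity of $A_\cs:\cz_\cs\to\Rm$ (Remark~\ref{rem:2}), and the proof of Theorem~\ref{thm:sc} consumes the separation hypotheses \eqref{sc1}--\eqref{sc2} only at the actual iterates $\bx^0,\ldots,\bx^{d-1}$, so the universal condition over $\cm_\cs$ can be weakened to the per-iterate one. Your retracing of the induction and the contradiction step via Lemma~\ref{lem:ck} matches the paper's argument for Theorem~\ref{thm:sc} essentially verbatim.
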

\begin{corollary}\label{cor:inex}
Let $A\in \RmN$ and $\bb\in\Rm$  be defined by \eqref{def:wu}.  Let $\cs\subset [N]$ with $d=|\cs|$. Then every nonzero vector $\bx^*\in\cm$  with $\supp(\bx^*)\subset\cs$ solve problem \eqref{pbn:ls-s}  in the least square sense via  Algorithm {\rm \ref{ap}} after at most $d$ iterations if $A_\cs$ has full column rank and
\[
\max_{j\in \cs}\big(A^T(\bb-A\bx^k)\big)_j > \max_{l\in [N]\backslash\cs}\big(A^T(\bb-A\bx^k)\big)_l,
\]
for $k=0,1,\ldots, d-1$.
\end{corollary}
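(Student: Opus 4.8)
The plan is to specialize the argument of Theorem \ref{thm:sc-ls} (equivalently, the proof of Theorem \ref{thm:sc}) to the iterate-dependent hypotheses stated here, replacing the exact-recovery conclusion by a certificate of least-square optimality read off from Lemma \ref{lem:ck}. First I would record that, by Remark \ref{rem:2}, the full column rank of $A_\cs$ guarantees the injectivity of $A_\cs:\cz_\cs\to\Rm$, and the same holds for every submatrix $A_{\cs^k}$ with $\cs^k\subset\cs$; this is what makes the least-square subproblem \eqref{eq:ck1} have a unique solution on each selected support. Throughout, I would write $\br^k:=\bb-A\bx^k$ and $c_k:=(\bx^k)^TA^T\br^k$, so that the assumed max-gap reads $\max_{j\in\cs}(A^T\br^k)_j>\max_{l\in[N]\backslash\cs}(A^T\br^k)_l$ for $k=0,\ldots,d-1$.

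The core is an induction showing that, for $0\le k\le d$, one has $\cs^k\subset\cs$ with $|\cs^k|=k$, exactly as in the proof of Theorem \ref{thm:sc}. The inclusion $\cs^k\subset\cs$ is immediate: the hypothesis forces the index $j_{k+1}=\argmax_{j\in[N]}(A^T\br^k)_j$ to lie in $\cs$, hence $\cs^{k+1}=\cs^k\cup\{j_{k+1}\}\subset\cs$. For the size claim I would argue by contradiction that $j_{k+1}\notin\cs^k$: if instead $j_{k+1}\in\cs^k$, then the global maximizer of the entries of $A^T\br^k$ already lies in $\cs^k$, so $\max_{\cs^k}=\max_\cs>\max_{[N]\backslash\cs}$. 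Combining this with the optimality conditions of Lemma \ref{lem:ck} applied to $\bx^k$ (the solution of \eqref{eq:ck1} on $\cs^k$), namely $(A^T\br^k)_l=c_k$ on $\supp(\bx^k)$ and $(A^T\br^k)_l\le c_k$ on $\cs^k\setminus\supp(\bx^k)$, yields $(A^T\br^k)_l\le c_k$ for all $l\in\cs$ and $(A^T\br^k)_l<c_k$ for all $l\in[N]\backslash\cs$. These are precisely the global optimality conditions for \eqref{pbn:ls-s}, so $\bx^k$ is a global least-square solution; by the uniqueness guaranteed by the full column rank of $A_\cs$, it coincides with $\bx^*$ and the algorithm has terminated in fewer than $d$ steps.

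Finally, if no such early termination occurs, the induction yields $|\cs^d|=d$ with $\cs^d\subset\cs$, whence $\cs^d=\cs$. Then $\bx^d$ is the unique solution of \eqref{eq:ck1} on the support $\cs$, and since $\bx^*\in\cm$ supported on $\cs$ is itself a least-square solution of \eqref{pbn:ls-s}, it is in particular optimal among vectors supported on $\cs$; uniqueness (full column rank of $A_\cs$) then forces $\bx^d=\bx^*$, so Algorithm \ref{ap} produces a least-square solution of \eqref{pbn:ls-s} in at most $d$ iterations. I expect the only delicate point to be the step that upgrades the support-restricted optimality of Lemma \ref{lem:ck} to global optimality over all of $[N]$: one must carefully use the strict max-gap hypothesis to bound $(A^T\br^k)_l$ by $c_k$ separately on $\cs\setminus\cs^k$ and on $[N]\backslash\cs$, since---unlike the exact case where $\bb=A\bx^*$ settles optimality at once---here optimality must be certified entirely from the KKT inequalities.
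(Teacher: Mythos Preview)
Your proposal is correct and follows essentially the same route the paper intends: the paper presents this corollary as an immediate consequence of Theorem~\ref{thm:sc-ls} together with Remark~\ref{rem:2} (full column rank of $A_\cs$ implies injectivity of $A_\cs:\cz_\cs\to\Rm$), and your write-up supplies precisely the adaptation of the proof of Theorem~\ref{thm:sc} to the least-squares setting that the paper leaves implicit. Your handling of the contradiction step---using the KKT conditions of Lemma~\ref{lem:ck} on $\cs^k$, then the fact that $j_{k+1}\in\cs^k$ forces the global maximum of $A^T\br^k$ to equal $c_k$, and finally the strict max-gap to push the inequality to all of $[N]$---is exactly the mechanism used in the paper's proof of Theorem~\ref{thm:sc}.
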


\begin{remark}\label{rem:1}
If one chooses the starting point $\bx^0={\bf 0}$ in Algorithm {\rm \ref{ap}}, then, in Theorems {\rm \ref{thm:sc}}--{\rm\ref{thm:sc-ls}} and Corollaries {\rm\ref{cor:ex}}--{\rm\ref{cor:inex}}, the condition
\[
\max_{j\in \cs}\big(A^T(\bb-A\bx^0)\big)_j > \max_{l\in [N]\backslash\cs}\big(A^T(\bb-A\bx^0)\big)_l.
\]
is replaced by
\[
\max_{j\in \cs}(A^T\bb)_j>\max_{l\in [N]\backslash\cs}(A^T\bb)_l.
\]
From the latter numerical examples, we can see that different sparse solutions to problem \eqref{pbn:ls-s} can be obtained via Algorithm {\rm \ref{ap}} with different choices of sparse $\bx^0\in\cm$ or $\bx^0={\bf 0}$.
\end{remark}

\section{Numerical experiments} \label{sec4}
In this section, we present the numerical performance of Algorithm \ref{ap} for solving problem \eqref{pbn:ls-s}. To illustrate the efficiency of our method, we compare the proposed algorithm with the maximum entropy rate approach (MEM)  in \cite{CC11}  and the projection-based gradient descent method (PG) in \cite{WW15}. All numerical tests were carried out using {\tt MATLAB R2020a} on a personal laptop with an Intel(R) Core(TM) i7--5500U CPU at 2.4 GHz  and 8GB of RAM.

In our numerical experiments, the initial point $\bx^0$ is chosen to be (a) $\bx^0={\bf 0}$ and (b) $\bx^0\in\cm$ is a random sparse $N$-vector with $s$ uniformly distributed nonzero entries, where $s=1,2$. The stopping criterion for Algorithm \ref{ap} is given by
\[
\|A_{\supp(\bx^k)}^T\br^k-({\bx}^k)^TA^T\br^k\|_2+
\|\max (A_{[N]\backslash \supp(\bx^k)}^T\br^k-({\bx}^k)^TA^T\br^k,{\bf 0})\|_2\le 10^{-7}
\]
with $\br^k=\bb-A\bx^k$ and the largest number of iterations for Algorithm \ref{ap} is set to be $m$. 

We consider the following numerical examples.

\begin{example}\label{ex2}
Consider another example in \cite{CJ12} with two genes {\rm (}$n=2${\rm )}, where the observed  transition probability matrix is given by
$$P_1=
\left[
\begin{array}{cccc}
0.1 & 0.3 & 0.2 & 0.1 \\
0.2 & 0.3 & 0.2 & 0.0 \\
0.0 & 0.0 & 0.6 & 0.4 \\
0.7 & 0.4 & 0.0 & 0.5 \\
\end{array}
\right].
$$
In this PBN,  there are $N=81$ BNs.
\end{example}


\begin{example}\label{ex4}
We consider a network in \cite{WW15} where the prescribed transition probability matrix of the PBN is given by
$$P_2=
\left[
\begin{array}{cc}
P_1 & 0  \\
0 & P_1
\end{array}
\right].$$
In this PBN, there are $6561$ BNs.
\end{example}

The numerical results for Examples \ref{ex2}--\ref{ex4} are displayed in Figures \ref{fig2-1}--\ref{fig4-1} and Tables \ref{tab2}--\ref{tab4}. Here, $\bx^\#$ denotes the computed  solution to problem \eqref{pbn:ls-s} obtained via MEM, PG, and Algorithm \ref{ap} accordingly, the symbols `{\tt Obj.}' and  `{\tt CT.}' mean the total computing time in seconds and the objective function value $\frac{1}{2}\|A\bx^\#-\bb\|_2^2$
at the final iterate of the corresponding algorithm, respectively, `${\tt sum(j)}$' is the sum  of the $j$  largest components of the computed solution $\bx^\#$ for different $j$ and `{\tt Obj(j)}' is the corresponding reconstructed objective function value $\frac{1}{2}\|\bb-A_{\pi(1:j)}\bx^\#(\pi(1:j))\|_2^2$, where $\pi=\{\pi(1),\ldots,\pi(N)\}$ is a permutation such that $$x^\#_{\pi(1)}\ge x^\#_{\pi(2)}\ge\cdots\ge x^\#_{\pi(N)}\quad \mbox{and}\quad \bx^\#(\pi(1:j))=(x^\#_{\pi(1)},\ldots, x^\#_{\pi(j)})^T\in\R^{j}.$$

We observe from Figures \ref{fig2-1}--\ref{fig4-1} that the solution obtained by Algorithm \ref{ap} are much sparser than MEM and PG. We also see from Tables \ref{tab2}--\ref{tab4} that the identified major BNs by Algorithm \ref{ap} with various initial guesses leads to much less residual than MEM and PG.
\begin{figure}[!h]
 \caption{The probability distribution $\bx^\#$ for Example \ref{ex2}.}
 \label{fig2-1}
 \centering
\includegraphics[width=0.3\textwidth]{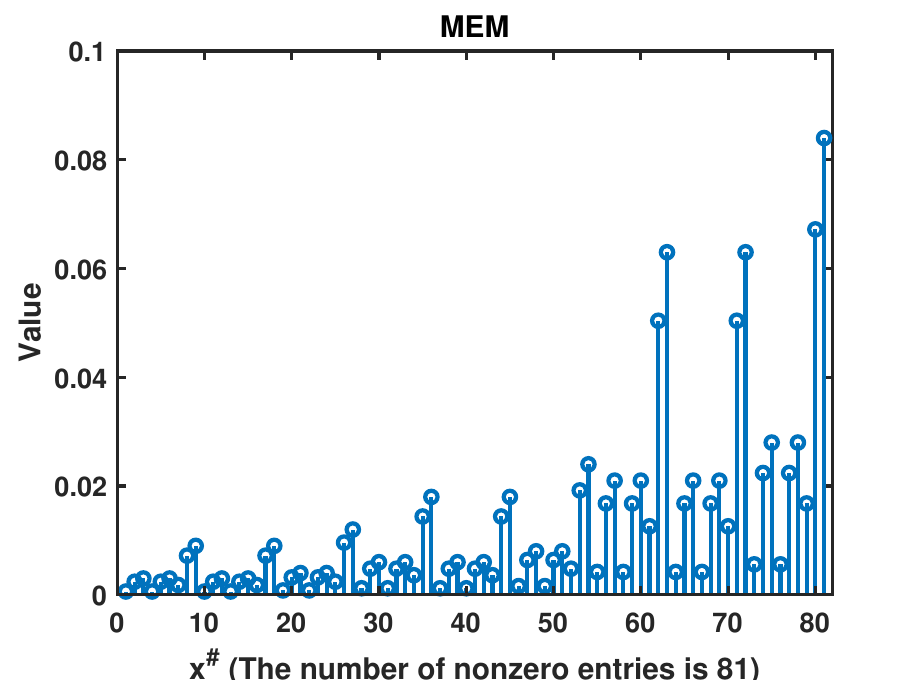}
\includegraphics[width=0.3\textwidth]{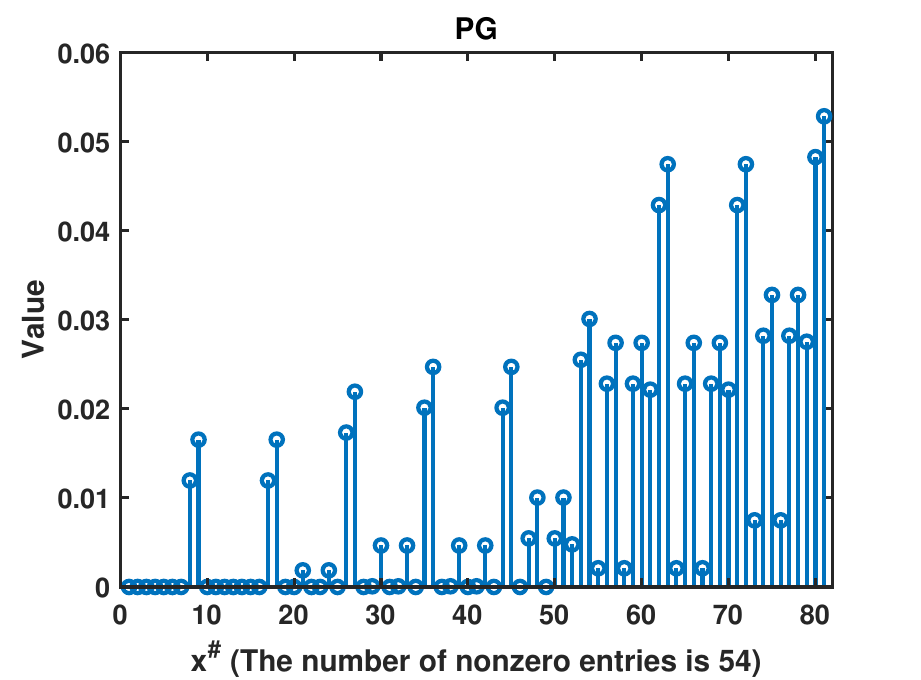}\\
\includegraphics[width=0.3\textwidth]{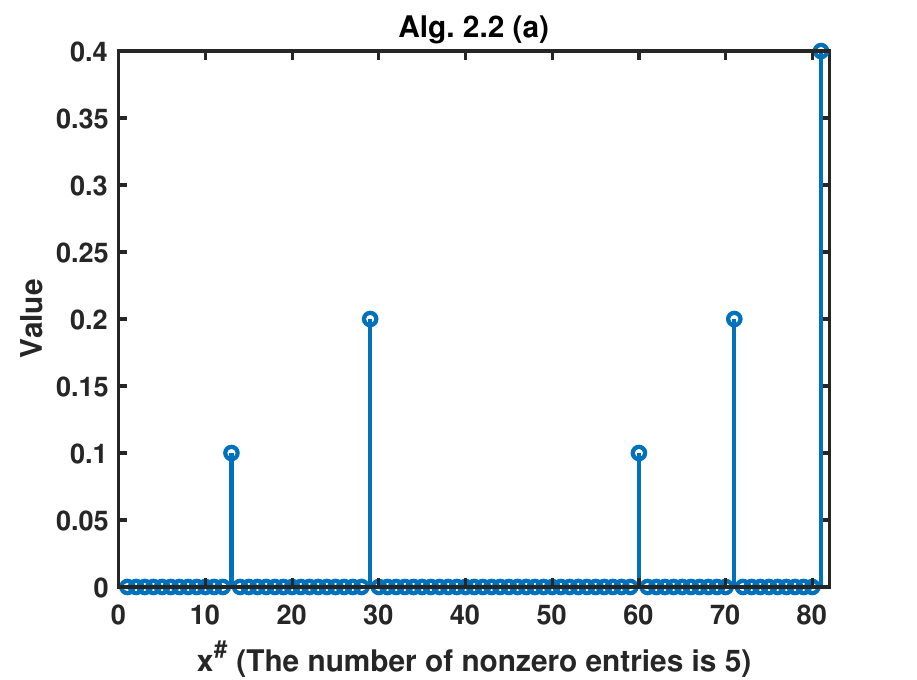}
\includegraphics[width=0.3\textwidth]{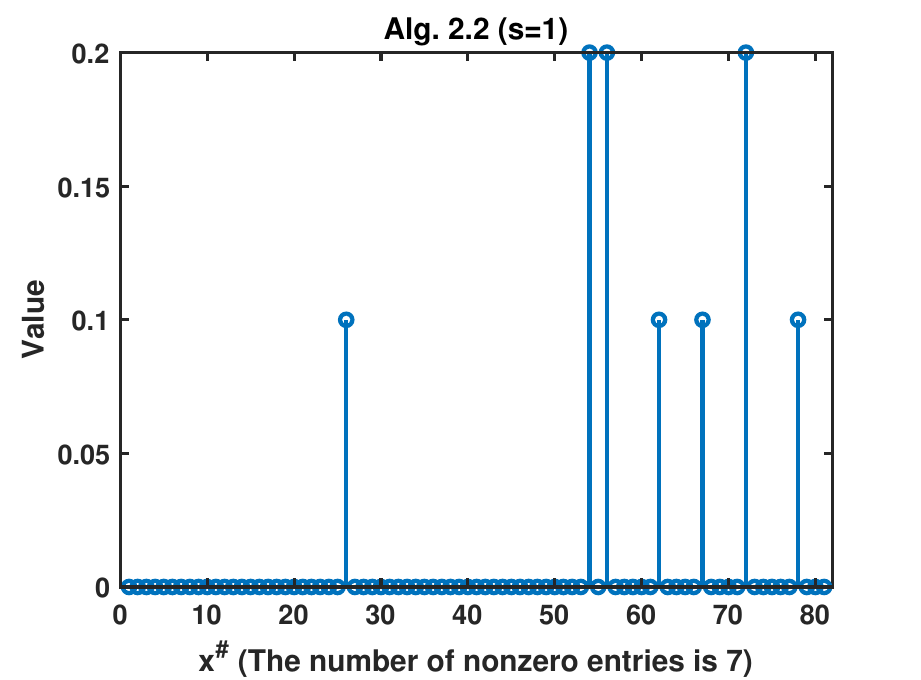}
\includegraphics[width=0.3\textwidth]{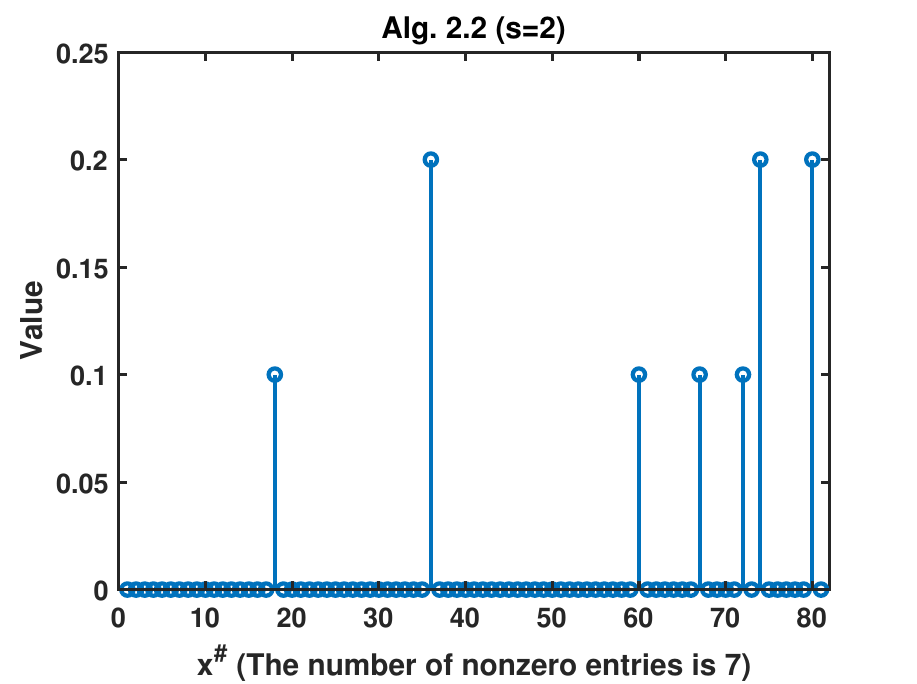}
\end{figure}

\begin{table}[!h]\renewcommand{\arraystretch}{1.0} \addtolength{\tabcolsep}{1.2pt}
\caption{Numerical results for Example \ref{ex2}.} \label{tab2}
\begin{center}
\resizebox{\textwidth}{25mm}{
 \begin{tabular}[c]{|c|l|c|l|c|l|c|l|c|l|c|}     \hline
& \multicolumn{2}{|c|}{MEM} & \multicolumn{2}{|c|}{PG} & \multicolumn{2}{|c|}{Alg. \ref{ap} (a)} & \multicolumn{2}{|c|}{Alg. \ref{ap} (s=1)} & \multicolumn{2}{|c|}{Alg. \ref{ap} (s=2)} \\ \hline
{\tt Obj.} &  \multicolumn{2}{|c|}{$1.9035\times 10^{-12}$} &  \multicolumn{2}{|c|}{$2.7759\times 10^{-8}$} &   \multicolumn{2}{|c|}{$1.9324\times 10^{-12}$} &  \multicolumn{2}{|c|}{$1.1331\times 10^{-12}$} &  \multicolumn{2}{|c|}{$1.2651\times 10^{-13}$}  \\ \hline
{\tt CT.} &  \multicolumn{2}{|c|}{$0.0039$} &  \multicolumn{2}{|c|}{$0.0022$} &  \multicolumn{2}{|c|}{$0.0274$}  &  \multicolumn{2}{|c|}{$0.0198$}  &  \multicolumn{2}{|c|}{$0.0273$}  \\ \hline
$\|\bx^\#\|_0$ &  \multicolumn{2}{|c|}{$81$} &  \multicolumn{2}{|c|}{$54$} &  \multicolumn{2}{|c|}{$5$} &  \multicolumn{2}{|c|}{$7$} &  \multicolumn{2}{|c|}{$7$}   \\ \hline\hline
 $j$ & ${\tt Obj(j)}$& ${\tt sum(j)}$  & ${\tt Obj(j)}$& ${\tt sum(j)}$  & ${\tt Obj(j)}$& ${\tt sum(j)}$  & ${\tt Obj(j)}$& ${\tt sum(j)}$   & ${\tt Obj(j)}$& ${\tt sum(j)}$  \\ \hline
$1$ & $1.1826\times 10^{0}$                       &$0.0840$  &      $1.2323\times 10^{0} $&$0.0528$  &$7.8740\times 10^{-1}$  &$0.4000$  &$1.1045\times 10^{0}$  &$0.2000$  &$1.0296\times 10^{0}$  &$0.2000$ \\ \hline
$2 $& $1.0809\times 10^{0}$                      &$0.1512$  &  $1.1578\times 10^{0} $    &$0.1011$  &$5.4772\times 10^{-1}$  &$0.6000$   &$8.3666\times 10^{-1}$  &$0.4000$ &$8.8318\times 10^{-1}$  &$0.4000$\\ \hline
$3 $&  $9.8401\times 10^{-1}$                     &$0.2142$  & $1.0837\times 10^{0}$     &$0.1486$  &$3.1623\times 10^{-1}$ &$0.8000$   &$5.4772\times 10^{-1}$ &$0.6000$  &$6.1644\times 10^{-1}$ &$0.6000$\\ \hline
$4 $& $8.8997\times 10^{-1}$                      &$0.2772$  &$1.0108\times 10^{0}$      &$0.1960$ &$2.0000\times 10^{-1}$ &$0.9000$     &$4.2426\times 10^{-1}$ &$0.7000$   &$4.8990\times 10^{-1}$ &$0.7000$    \\ \hline
$5$&   $8.1831\times 10^{-1}$                     &$0.3276$  & $9.4840\times 10^{-1}   $  &$0.2389$  &$1.9324\times 10^{-12}$ &$1.0000$   &$2.8284\times 10^{-1}$ &$0.8000$  &$3.4641\times 10^{-1}$ &$0.8000$       \\  \hline
$6$& $7.4996\times 10^{-1}$                      &$0.3780$   & $8.8778\times 10^{-1}$     &$0.2818$  &                                          &                 &$2.0000\times 10^{-1}$ &$0.9000$  &$2.0000\times 10^{-13}$ &$0.9000$       \\ \hline
$7$& $7.1145\times 10^{-1}$                       &$0.4060$  &  $8.4215\times 10^{-1} $   &$0.3146$  &                                           &                &$1.1331\times 10^{-12}$ &$1.0000$    &$1.2651\times10^{-13} $& $1.0000$    \\ \hline
$8$&   $6.7424\times 10^{-1}$                    &$0.4340$   & $7.9794\times 10^{-1}$     &$0.3474$  &                                           &               & & & & \\ \hline
$9$ & $6.4511\times 10^{-1}$                      &$0.4580$   &$7.6017\times 10^{-1}$      &$0.3775$  &                                             &               & & & & \\ \hline
$10$&   $6.1692\times 10^{-1}$                   &$0.4804$   & $7.2474\times 10^{-1}$      &$0.4057$ &                                         &                 & & & & \\ \hline
$11$& $5.8994\times 10^{-1}$                      &$0.5028$   &$6.9094\times 10^{-1}$      &$0.4339$  &                                          &                 & & & & \\ \hline
$12$&$5.6353\times 10^{-1}$                        &$0.5238$   & $6.5929\times 10^{-1}$    &$0.4615$  &                                         &                 & & & & \\ \hline
$13$&$5.3828\times 10^{-1}$                        &$0.5448$   &$6.2431\times 10^{-1}$      &$0.4889$ &                                         &                 & & & & \\ \hline
$14$& $5.1350\times 10^{-1}$                       &$0.5658$    &$5.9108\times 10^{-1}$    &$0.5163$   &                                        &                & & & & \\ \hline
$15$&  $4.9018\times 10^{-1}$                     &$0.5868$    &$5.5855\times 10^{-1}$    & $0.5437$ &                                          &                     & & & & \\ \hline
$16$& $4.6415\times 10^{-1}$                       &$0.6060$    &$5.2830\times 10^{-1}$   &$0.5711$  &                                           &                 & & & & \\ \hline
\end{tabular} }
\end{center}
\end{table}
\begin{figure}[!h]
\caption{The probability distribution $\bx^{\#}$ for Example \ref{ex4}.}
 \label{fig4-1}
 \centering
\includegraphics[width=0.3\textwidth]{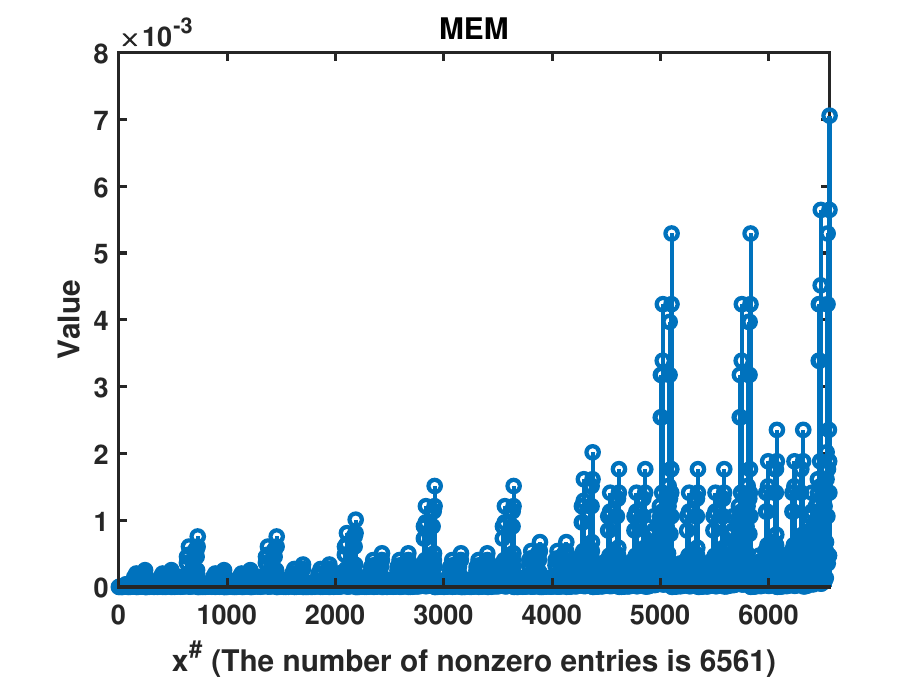}
\includegraphics[width=0.3\textwidth]{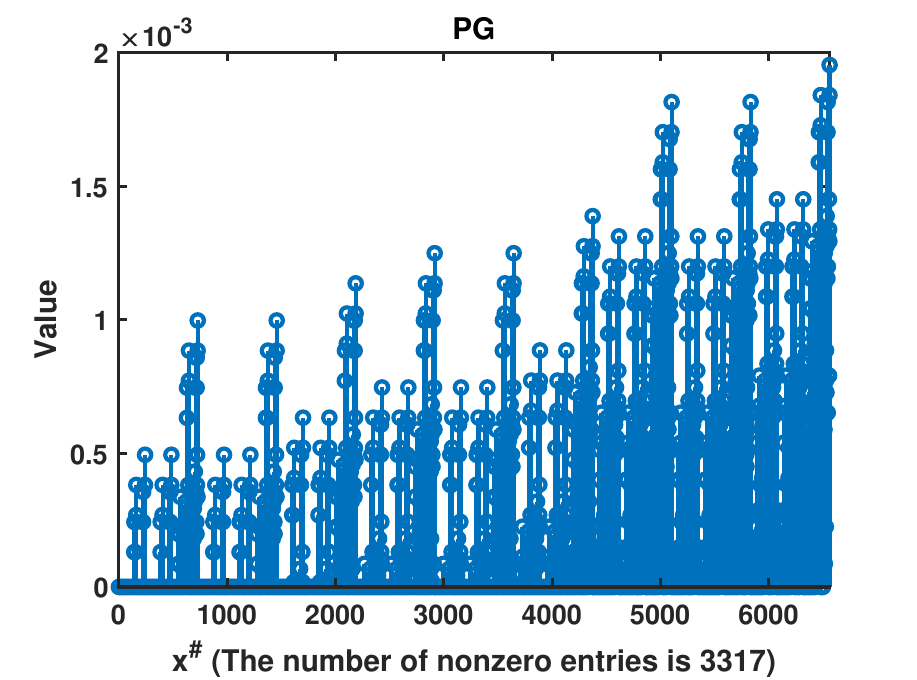}\\
\includegraphics[width=0.3\textwidth]{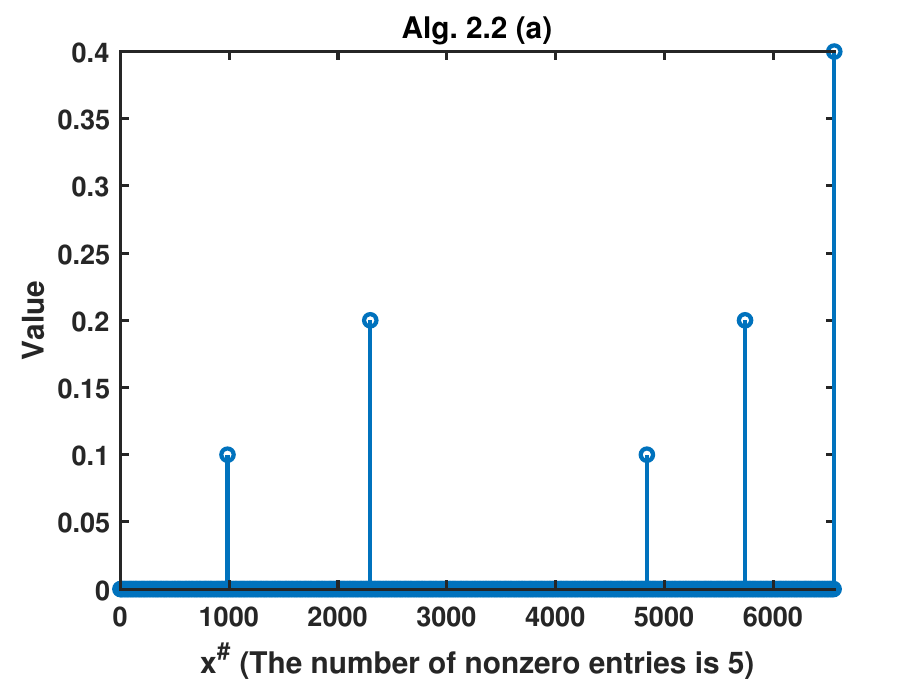}
\includegraphics[width=0.3\textwidth]{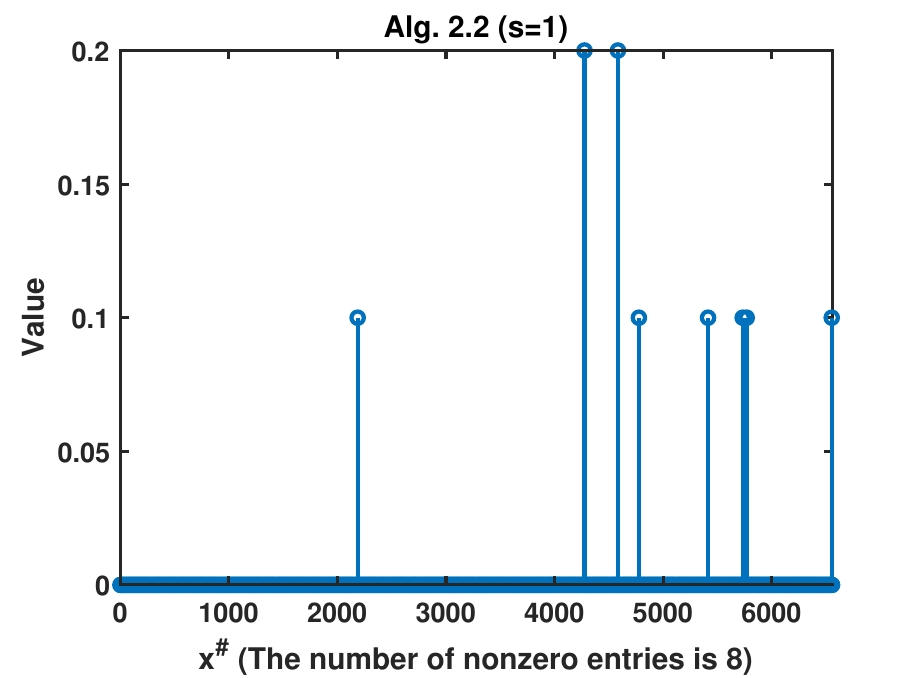}
\includegraphics[width=0.3\textwidth]{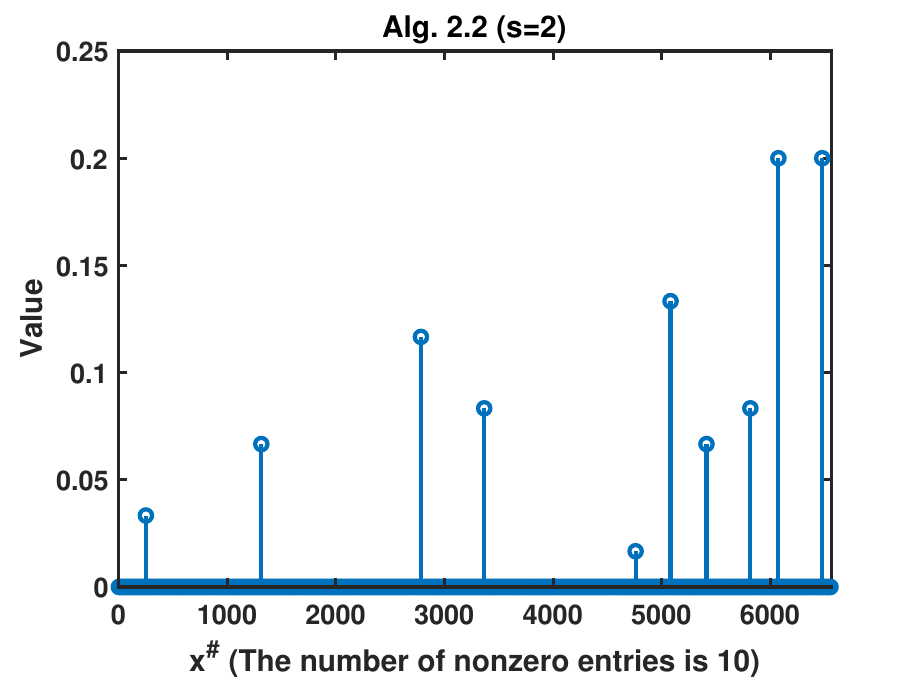}
 \end{figure}

\begin{table}[!h]\renewcommand{\arraystretch}{1.0} \addtolength{\tabcolsep}{1.0pt}
\caption{Numerical results for Example \ref{ex4}.}  \label{tab4}
\begin{center}
\resizebox{\textwidth}{25mm}{
  \begin{tabular}[c]{|c|l|c|l|c|l|c|l|c|l|c|}     \hline
& \multicolumn{2}{|c|}{MEM} & \multicolumn{2}{|c|}{PG} & \multicolumn{2}{|c|}{Alg. \ref{ap} (a)} & \multicolumn{2}{|c|}{Alg. \ref{ap} (s=1)} & \multicolumn{2}{|c|}{Alg. \ref{ap} (s=2)} \\ \hline
{\tt Obj.} &  \multicolumn{2}{|c|}{$3.6129\times 10^{-9}$} &  \multicolumn{2}{|c|}{$1.5072\times 10^{-7}$} &   \multicolumn{2}{|c|}{$1.6574\times 10^{-13}$} &  \multicolumn{2}{|c|}{$1.0598\times 10^{-13}$} &  \multicolumn{2}{|c|}{$2.3896\times 10^{-12}$}  \\ \hline
{\tt CT.} &  \multicolumn{2}{|c|}{$2.3761$} &  \multicolumn{2}{|c|}{$1.6465$} &  \multicolumn{2}{|c|}{$0.0286$}  &  \multicolumn{2}{|c|}{$0.0351$}  &  \multicolumn{2}{|c|}{$0.0407$}  \\ \hline
$\|\bx^\#\|_0$ &  \multicolumn{2}{|c|}{$6561$} &  \multicolumn{2}{|c|}{$3317$} &  \multicolumn{2}{|c|}{$5$} &  \multicolumn{2}{|c|}{$8$} &  \multicolumn{2}{|c|}{$10$}   \\ \hline\hline
$j$ & ${\tt Obj(j)}$& ${\tt sum(j)}$  & ${\tt Obj(j)}$& ${\tt sum(j)}$  & ${\tt Obj(j)}$& ${\tt sum(j)}$  & ${\tt Obj(j)}$& ${\tt sum(j)}$   & ${\tt Obj(j)}$& ${\tt sum(j)}$  \\ \hline
$1$ & $1.8489\times 10^{0}$                       &$0.0071$  &      $1.8609\times 10^{0} $&$0.0020$  &$1.1136\times 10^{0}$  &$0.4000$ &$1.6248\times 10^{0}$  &$0.2000$   &$1.4832\times 10^{0}$  &$0.2000$       \\ \hline
$2 $& $1.8359\times 10^{0}$                      &$0.0127$  &  $1.8566\times 10^{0} $    &$0.0038$  &$7.7460\times 10^{-1}$  &$0.6000$ &$1.2166\times 10^{0}$  &$0.4000$  &$1.1136\times 10^{0}$  &$0.4000$  \\ \hline
$3 $&  $1.8230\times 10^{0}$                     &$0.0183$  & $1.8524\times 10^{0}$     &$0.0056$  &$4.4721\times 10^{-1}$ &$0.8000$   &$1.0198\times 10^{0}$ &$0.5000$   &$8.6152\times 10^{-1}$ &$0.5333$\\ \hline
$4 $& $1.8108\times 10^{0}$                      &$0.0236$  &$1.8482\times 10^{0}$      &$0.0075$ &$2.8284\times 10^{-1}$ &$0.9000$&$7.8740\times 10^{-1}$ &$0.6000$        &$6.9121\times 10^{-1}$ &$0.6500$\\ \hline
$5$&   $1.7988\times 10^{0}$                     &$0.0289$  & $1.8440\times 10^{0}   $  &$0.0093$  &$1.6574\times 10^{-13}$ &$1.0000$    &$6.1644\times 10^{-1}$ &$0.7000$  &$5.3229\times 10^{-1}$ &$0.7333$      \\  \hline
$6$& $1.7867\times 10^{0}$                      &$0.0342$   & $1.8399\times 10^{0}$     &$0.0111$  &                                          &                   &$4.6904\times 10^{-1}$ &$0.8000$   &$3.8006\times 10^{-1}$ &$0.8167$\\ \hline
$7$& $1.7746\times 10^{0}$                      &$0.0395$   &  $1.8357\times 10^{0} $   &$0.0129$  &                                         &                    &$2.8284\times 10^{-1}$ &$0.9000$  &$2.6667\times 10^{-1}$ &$0.8833$   \\ \hline
$8$&   $1.7645\times 10^{0}$                    &$0.0440$   & $1.8318\times 10^{0}$     &$0.0146$  &                                         &                     &$1.0598\times 10^{-13}$ &$1.0000$  &$1.2472\times 10^{-1}$ &$0.9500$\\ \hline
$9$ & $1.7551\times 10^{0}$                      &$0.0483$   &$1.8280\times 10^{0}$    &$0.0163$  &                                           &                     &                                          &               & $4.7140\times 10^{-2}$ &$0.9833$ \\ \hline
$10$&   $1.7456\times 10^{0}$                    &$0.0525$   & $1.8242\times 10^{0}$  &$0.0180$ &                                             &                   &                                             &             & $2.3896\times 10^{-12}$  &$1.0000$ \\ \hline
$11$& $1.7362\times 10^{0}$                     &$0.0567$   &$1.8204\times 10^{0}$      &$0.0197$  &  &  & & & & \\ \hline
$12$&$1.7268\times 10^{0}$                      &$0.0610$   & $1.8166\times 10^{0}$    &$0.0214$  &  &  & & & & \\ \hline
$13$&$1.7174\times 10^{0}$                       &$0.0652$   &$1.8127\times 10^{0}$     &$0.0231$ &  &  & & & & \\ \hline
$14$& $1.7079\times 10^{0}$                     &$0.0694$    &$1.8089\times 10^{0}$   &$0.0248$   &  &  & & & & \\ \hline
$15$&  $1.6985\times 10^{0} $                      &$0.0737$    &$1.8051\times 10^{0}$   & $0.0265$ &  &  & & & & \\ \hline
$16$& $1.6891\times 10^{0}$                        &$0.0779$    &$1.8013\times 10^{0}$  &$0.0282$  &  &  & & & & \\ \hline
\end{tabular} }
\end{center}
\end{table}

To further illustrate the effectiveness of our method, in the following numerical example, we  only compare the performance of our method with that of PG for reconstructing a sparse solution to  problem \eqref{pbn:ls-s} in the least square sense since the problem size is very large and the MEM is not so effective as expected.
\begin{example}\label{ex5}
We consider a network in \cite{GC13} for modelling credit defaults, where the prescribed transition probability matrix of the PBN is given by
\[
P_3=
\left[
\begin{array}{cccccccc}
0.57&0.00&0.10&0.00&0.00&0.04&0.00&0.00\\
0.14&0.31&0.00&0.50&0.13&0.13&0.33&0.06\\
0.00&0.08&0.40&0.25&0.25&0.00&0.67&0.00\\
0.00&0.15&0.00&0.00&0.00&0.08&0.00&0.00\\
0.00&0.15&0.30&0.00&0.00&0.13&0.00&0.00\\
0.29&0.31&0.20&0.00&0.25&0.29&0.00&0.39\\
0.00&0.00&0.00&0.00&0.38&0.00&0.00&0.00\\
0.00&0.00&0.00&0.25&0.00&0.33&0.00&0.56
\end{array}
\right].
\]
In this PBN, there are $25920$ BNs.
\end{example}

The numerical results for Example \ref{ex5} are displayed in Figures \ref{fig5-1} and Table \ref{tab5}.
Figure \ref{fig5-1} shows that the least square solution generated by our method is  much sparse than PG. We also see from Table \ref{tab5} that the major BNs obtained by Algorithm \ref{ap} yields much less residual than PG.

\begin{figure}[!h]
 \caption{The probability distribution $\bx^{\#}$ for Example \ref{ex5}.}
 \centering
\includegraphics[width=0.24\textwidth]{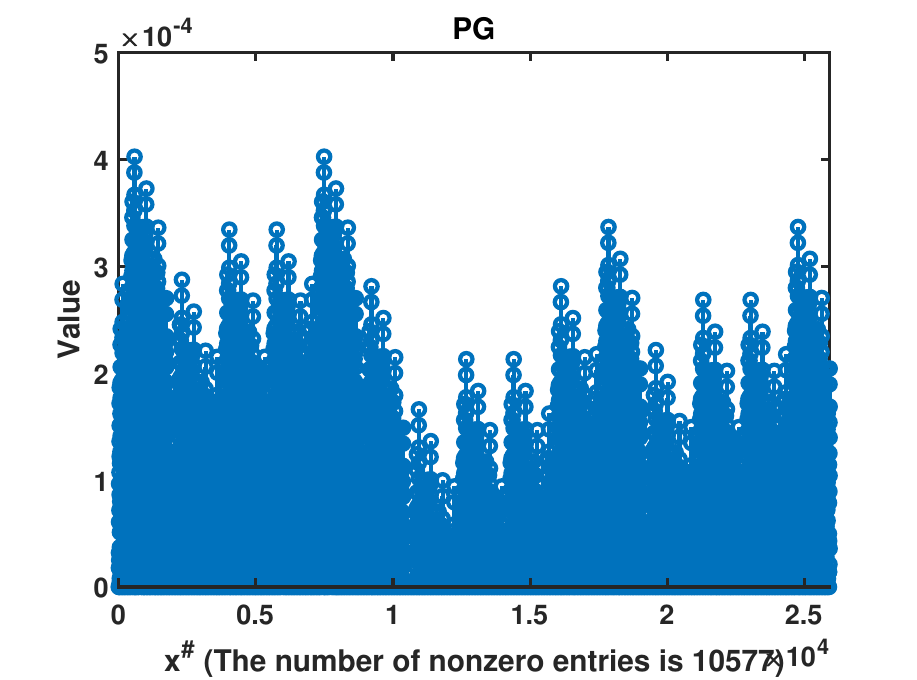}
\includegraphics[width=0.24\textwidth]{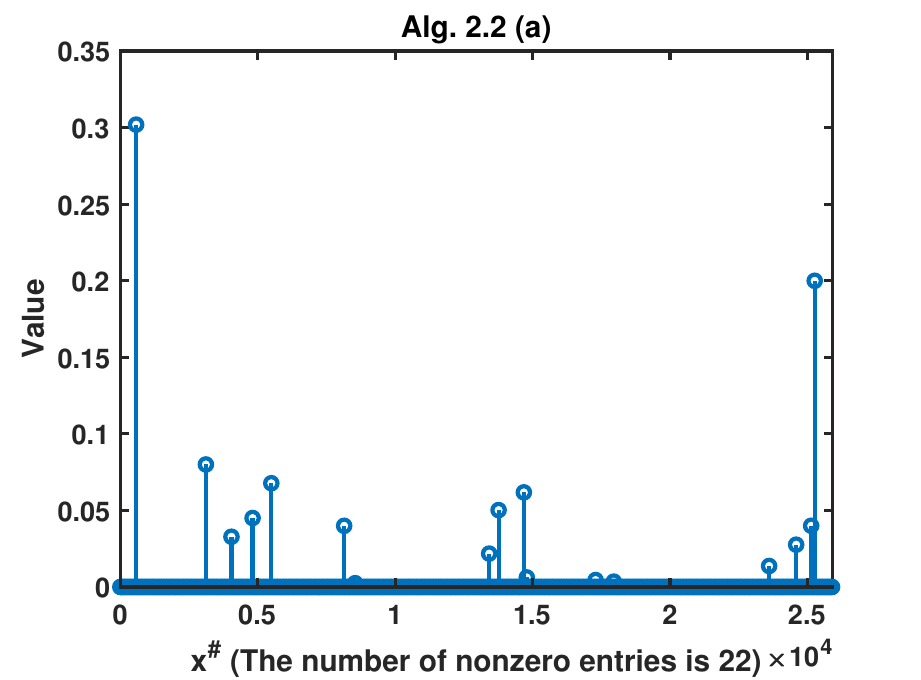}
\includegraphics[width=0.24\textwidth]{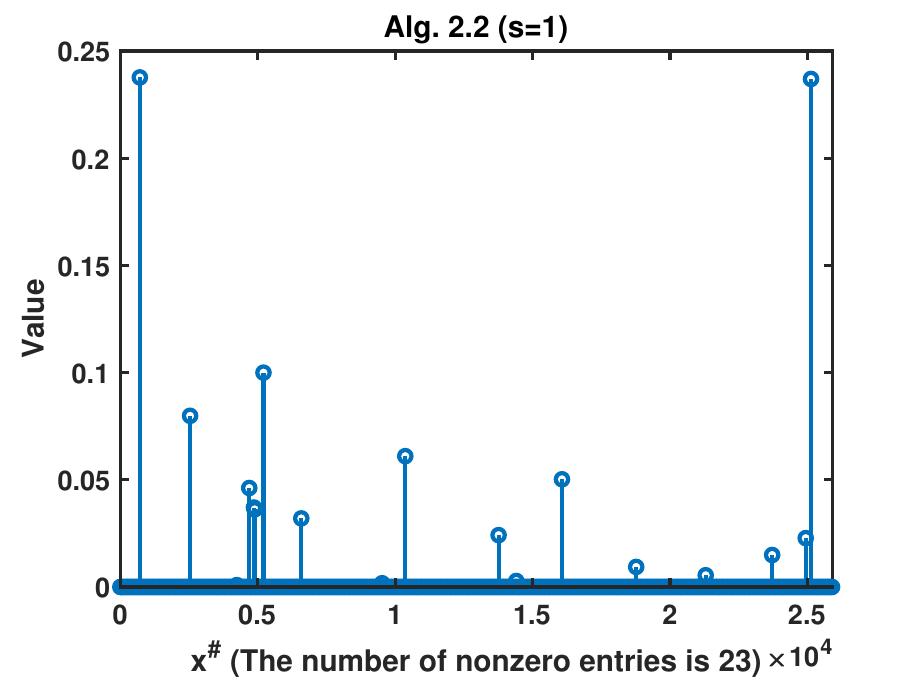}
\includegraphics[width=0.24\textwidth]{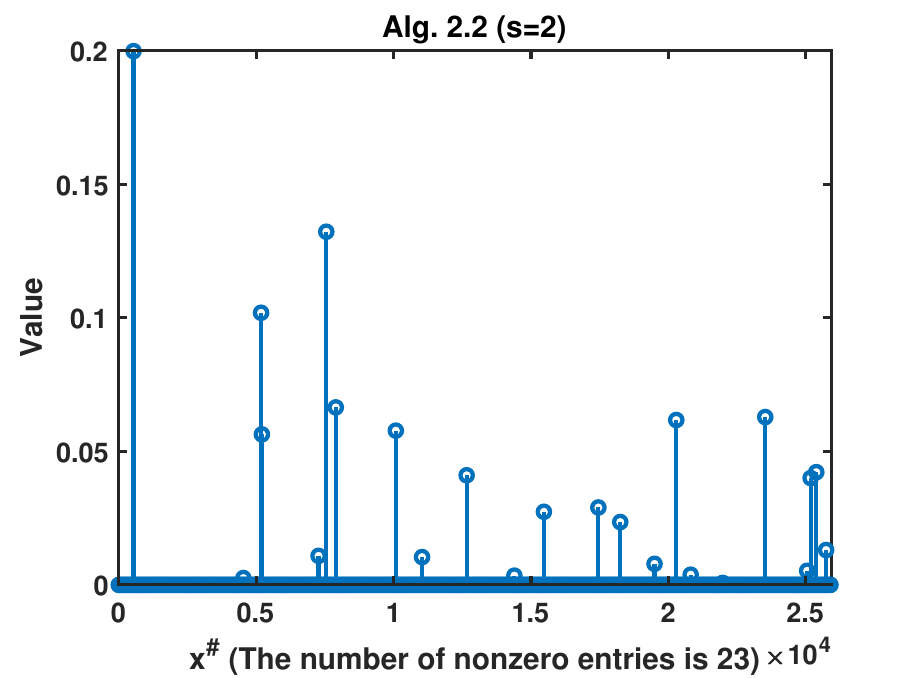}

 \label{fig5-1}
 \end{figure}

\begin{table}[!h]\renewcommand{\arraystretch}{1.0} \addtolength{\tabcolsep}{1.0pt}
\caption{Numerical results for Example \ref{ex5}.} \label{tab5}
\begin{center}
\resizebox{\textwidth}{40mm}{
  \begin{tabular}[c]{|c|l|c|l|c|l|c|l|c|}     \hline
& \multicolumn{2}{|c|}{PG} & \multicolumn{2}{|c|}{Alg. \ref{ap} (a)} & \multicolumn{2}{|c|}{Alg. \ref{ap} (s=1)} & \multicolumn{2}{|c|}{Alg. \ref{ap} (s=2)} \\ \hline
{\tt Obj.} &  \multicolumn{2}{|c|}{$7.6376\times 10^{-3}$} &  \multicolumn{2}{|c|}{$7.6376\times 10^{-3}$} &   \multicolumn{2}{|c|}{$7.6376\times 10^{-3}$} &  \multicolumn{2}{|c|}{$7.6376\times 10^{-3}$} \\ \hline
{\tt CT.} &  \multicolumn{2}{|c|}{$2.9839$} &  \multicolumn{2}{|c|}{$0.1392$} &  \multicolumn{2}{|c|}{$0.2088$}  &  \multicolumn{2}{|c|}{$0.2310$}   \\ \hline
$\|\bx^\#\|_0$ &  \multicolumn{2}{|c|}{$10577$} &  \multicolumn{2}{|c|}{$22$} &  \multicolumn{2}{|c|}{$23$} &  \multicolumn{2}{|c|}{$23$}   \\ \hline\hline
$j$ & ${\tt Obj(j)}$& ${\tt sum(j)}$  & ${\tt Obj(j)}$& ${\tt sum(j)}$  & ${\tt Obj(j)}$& ${\tt sum(j)}$  & ${\tt Obj(j)}$& ${\tt sum(j)}$     \\ \hline
 $1$   &      $1.7007\times 10^{0} $&$0.0004$  &$1.1740\times 10^{0}$  &$0.3019$ &$1.3997\times 10^{0}$  &$0.2377$   &$1.3595\times 10^{0}$ &$0.1998$      \\ \hline
$2 $   &  $1.6998\times 10^{0} $    &$0.0008$  &$8.5697\times 10^{-1}$  &$0.5019$ &$9.4187\times 10^{-1}$  &$0.4747$  &$1.1870\times 10^{0}$ &$0.3319$  \\ \hline
$3 $ & $1.6990\times 10^{0}$     &$0.0012$  &$7.2396\times 10^{-1}$ &$0.5819$   &$7.7130\times 10^{-1}$ &$0.5746$ &$1.0009\times 10^{0}$ &$0.4338$ \\ \hline
$4 $ &$1.6982\times 10^{0}$      &$0.0016$ &$6.1625\times 10^{-1}$ &$0.6497$   &$6.2345\times 10^{-1}$ &$0.6545$  &$8.6691\times 10^{-1}$ &$0.5002$      \\ \hline
$5$  & $1.6974\times 10^{0}   $  &$0.0019$  &$5.2749\times 10^{-1}$ &$0.7114$  &$5.1882\times 10^{-1}$ &$0.7155$  &$7.5744\times 10^{-1}$ &$0.5631$      \\  \hline
$6$  & $1.6966\times 10^{0}$     &$0.0023$  &$4.3873\times 10^{-1}$ &$0.7616$ &$4.3079\times 10^{-1}$ &$0.7657$ &$6.5260\times 10^{-1}$ &$0.6248$\\ \hline
$7$  &  $1.6958\times 10^{0} $   &$0.0027$  &$3.5896\times 10^{-1}$ &$0.8067$   &$3.4471\times 10^{-1}$ &$0.8119$ &$5.4269\times 10^{-1}$ &$0.6825$ \\ \hline
$8$  & $1.6950\times 10^{0}$     &$0.0031$  &$2.7739\times 10^{-1}$ &$0.8466$   &$2.7640\times 10^{-1}$ &$0.8490$  &$4.4473\times 10^{-1}$ &$0.7388$ \\ \hline
$9$  &$1.6943\times 10^{0}$      &$0.0034$  &$2.1471\times 10^{-1}$ &$0.8866$    &$2.0859\times 10^{-1}$ &$0.8852$     &$3.6801\times 10^{-1}$ &$0.7810$  \\ \hline
$10$ & $1.6935\times 10^{0}$      &$0.0038$ &$1.5347\times 10^{-1}$ & $0.9194$   &$1.5972\times 10^{-1}$ &$0.9172$   &$3.0392\times 10^{-1}$ &$0.8221$\\ \hline
$11$  &$1.6928\times 10^{0}$      &$0.0041$  &$1.0387\times 10^{-1}$ &$0.9470$     &$1.2150\times 10^{-1}$ &$0.9414$   &$2.3503\times 10^{-1}$ &$0.8621$ \\ \hline
$12$  & $1.6920\times 10^{0}$    &$0.0045$  &$6.6657\times 10^{-2}$ &$0.9688$     &$7.2729\times 10^{-2}$ &$0.9642$  &$1.9174\times 10^{-1}$ &$0.8910$\\ \hline
$13$ &$1.6912\times 10^{0}$       &$0.0049$ &$3.6778\times 10^{-2}$ &$0.9825$      &$4.4031\times 10^{-2}$ &$0.9791$  &$1.4679\times 10^{-1}$ &$0.9184$\\ \hline
$14$ &$1.6905\times 10^{0}$      &$0.0052$   &$2.6746\times 10^{-2}$  &$0.9887$    &$2.6876\times 10^{-2}$ &$0.9885$    &$1.0694\times 10^{-1}$ &$0.9419$ \\ \hline
$15$ &$1.6898\times 10^{0}$      & $0.0056$ &$1.8270\times 10^{-2}$ &$0.9933$       &$1.6651\times 10^{-2}$ &$0.9939$ &$8.5392\times 10^{-2}$ &$0.9550$ \\ \hline
$16$  &$1.6890\times 10^{0}$         &$0.0059$  &$1.2374\times 10^{-2}$  &$0.9968$      &$1.1768\times 10^{-2}$ &$0.9967$   &$7.1039\times 10^{-2}$ &$0.9658$\\ \hline
$17$ &$1.6883\times 10^{0}$      & $0.0063$ &$8.3336\times 10^{-3}$ &$0.9993$       &  $9.2747\times 10^{-3}$  & $0.9985$&$5.1373\times 10^{-2}$ &$0.9763$ \\ \hline
$18$ &$1.6876\times 10^{0}$       &$0.0066$ &$7.8849\times 10^{-3}$ &$0.9997$      &$8.1940\times 10^{-3}$ &$0.9994$  &$3.4444\times 10^{-2}$ &$0.9841$\\ \hline
$19$ &$1.6869\times 10^{0}$      &$0.0070$   &$7.7370\times 10^{-3}$  &$0.9999$    &$7.9249\times 10^{-3}$ &$0.9997$    &$2.5720\times 10^{-2}$ &$0.9894$ \\ \hline
$20$ &$1.6862\times 10^{0}$      & $0.0073$ &$7.6775\times 10^{-3}$ &$0.99995$       &$7.7959\times 10^{-3}$ &$0.9998$ &$1.8746\times 10^{-2}$ &$0.9932$ \\ \hline
$21$  &$1.6855\times 10^{0}$         &$0.0076$  &$7.6503\times 10^{-3}$  &$0.99998$      &$7.6777\times 10^{-3}$ &$0.9999$   &$1.2993\times 10^{-2}$ &$0.9967$\\ \hline
$22$ &$1.6848\times 10^{0}$      & $0.0080$ &$7.6376\times 10^{-3}$ &$1.0000$       &$7.6563\times 10^{-3}$&$0.99998$ &$8.4447\times 10^{-3}$ &$0.9993$ \\ \hline
$23$  &$1.6841\times 10^{0}$         &$0.0083$  &                                   &                       &$7.6376\times 10^{-3}$ &$1.0000$   &$7.6376\times 10^{-3}$ &$1.0000$\\ \hline
\end{tabular} }
\end{center}
\end{table}

\section{Concluding remarks} \label{sec5}
Several numerical methods have been developed for the construction of sparse probabilistic Boolean networks. However, few greedy methods were explored. In this paper, we propose a greedy-type method, a modified  orthogonal matching pursuit,  for solving the inverse problem. We derive some conditions such that, given the transition probability matrix, our method can recover a sparse  probabilistic Boolean network exactly or in the least square sense. Numerical experiments show that our method is very effective in terms of sparse recovery. An interesting question is how to analyze the exact  sparse recovery condition in terms of the  coherence as in \cite{T04}. This needs further study.

\end{document}